\newtheorem{theorem}{Theorem}
\newtheorem{theoremx}{Theorem}
\newtheorem{lemma}{Lemma}
\theoremstyle{definition}
\newtheorem{definition}{Definition}
\newtheorem*{remark}{Remark}
\title[Asymptotic degree distribution in preferential attachment graph models]{Asymptotic degree distribution in preferential attachment graph models with multiple type edges}
\author{\'Agnes Backhausz}
\address{Department of Probability Theory and Statistics\\Faculty of Science\\ELTE E\"otv\"os Lor\'and University, Budapest, Hungary\\and\\MTA Alfr\'ed R\'enyi Institute of Mathematics, Budapest, Hungary}
\email{agnes@math.elte.hu}
\author{Bence Rozner}
\address{Department of Probability Theory and Statistics\\Faculty of Science\\ELTE E\"otv\"os Lor\'and University, Budapest, Hungary}
\email{robsaat@caesar.elte.hu}
\keywords{Random graphs, preferential attachment, asymptotic degree distribution}
\subjclass[2010]{Primary: 05C80}
\date{\today}
\begin{document}
\maketitle
\thispagestyle{empty}

\begin{abstract}
	We deal with a general preferential attachment graph model with multiple type edges. The types are chosen randomly, in a way that depends on the evolution of the graph. In the $N$-type case, we define the (generalized) degree of a given vertex as $\boldsymbol{d}=(d_{1},d_{2},\dots,d_{N})$, where $d_{k}\in\mathbb{Z}_{0}^{+}$ is the number of type $k$ edges connected to it. We prove the existence of an a.s.\ asymptotic degree distribution for a general family of preferential attachment random graph models with multi-type edges. More precisely, we show that the proportion of vertices with (generalized) degree $\boldsymbol{d}$ tends to some random variable as the number of steps goes to infinity. We also provide recurrence equations for the asymptotic degree distribution. Finally, we generalize the scale-free property of random graphs to the multi-type case.
\end{abstract}

\section{Introduction}
Various types of random graphs with preferential attachment dynamics have been examined in the last decade, see e.g.\ \cite{BarabasiAlbert_Albert,Cooper_Frieze,Durrett,Frieze_Karonski,Hofstad}. The analysis of these kind of random graphs is motivated by large real networks, such as the internet and various biological and social networks, in which vertices of larger degree have more chance to be connected to new vertices. In many applications, it is natural to assign some kind of characteristics to the vertices or to the edges of the graph. For example, the strength of a connection may be represented by edge weights, or vertices can have different fitness, which has an impact on their degrees, see e.g.\ \cite{Dereich_Ortgiese,Garavaglia_Hofstad}. It may also happen that the type of a vertex or an edge is chosen from a finite set of possibilities. This leads to different phenomena as if we assign weights to the vertices or to the edges. For example, in a social network, the vertices can be considered as males or females, and the edges can be considered as family or work relationships. Another example is the network of financial systems, where the systemic risk is examined, see e.g.\ \cite{Acemoglu}. To understand these kind of financial systems it is common to use graphs where the vertices are financial institutions (e.g.\ banks), and the edges represent different types of financial instruments traded by the institutions. The risk arising from these instruments (bonds, stocks or options etc.) can be different, which must be taken into account in the calculation of the systemic risk. A way to do this is to assign types for the edges to represent the classes of these assets. To model folded RNA-molecules, David, Hagendorf and Wiese introduced a random graph in \cite{David} which grows by a process similar to the preferential attachment and there are two types of vertices.

There are some multi-type preferential attachment graph models that have been investigated in which only the vertices have types. Antunovi\'c, Mossel and R\'acz introduced a model of competition on growing networks in \cite{Racz}. In their model, when a new vertex is born, it attaches to the old vertices by preferential attachment, and selects its type based on the number of its initial neighbours of each type. Their main interest is the question of coexistence, i.e.\ the probability that one of the types dies out asymptotically. Abdullah, Bode and Fountoulakis present a model in \cite{Abdullah}, but they use a different rule for choosing the types. At each step, a new vertex is born, it polls some of the old vertices and takes the majority type. A multi-type preferential attachment model was introduced by Rosengren in \cite{Rosengren} which has similar dynamics to the model presented in \cite{Racz}. The asymptotic degree distribution is examined by using methods from the theory of multi-type branching processes.

Notice that the growing networks in the 2-type case can equivalently be viewed as a directed graph. In this case the types of the edges are orientations, more precisely, when there is a new vertex then it is attached to the graph with an edge from the new vertex to the existing ones or from the existing vertices to the new one, and this corresponds to two different types. Different directed preferential attachment models were introduced in \cite{Bolobas_Borgs_Chayes_Riordan,Wang_Resnick}. They examine a growing network in which a new vertex and a new edge is added to the graph in every step. At first, the orientation of the edge between the new and the existing vertices is decided with fixed probability. Finally the endpoint of the new edge among the existing vertices is chosen by using a preferential attachment rule. In \cite{Bolobas_Borgs_Chayes_Riordan} it is also possible that the new edge is added between existing vertices. In \cite{Bolobas_Borgs_Chayes_Riordan,Wang_Resnick} the asymptotic degree distribution is examined. In those models which are discussed in this article, we first choose the endpoint of the new vertex and then the type of the new edge is decided with probabilities depending on the structure of the graph.

In this paper we extend the preferential attachment model by assigning types to the edges. For trees, this is usually not an essential difference compared to the cases where the vertices have types, but we consider more complex networks. We assume that there is a connection between the evolution of the structure of the graph and the types of the edges. In the $N$-type case, we define the (generalized) degree of a given vertex as $\boldsymbol{d}=(d_{1},d_{2},\dots,d_{N})$, where $d_{k}$ is the number of type $k$ edges connected to it. By using martingale techniques, we prove the existence of an almost sure asymptotic degree distribution. More precisely, we show that for every $\boldsymbol{d}$, the proportion of vertices with generalized degree $\boldsymbol{d}$ tends to some random variable in certain random graph models with multiple type edges as the number of steps goes to infinity. We also provide recurrence equations for the asymptotic degree distribution. The results are verified not just for particular graph models; instead, we follow a model-free approach
and formulate sufficient conditions for the existence of asymptotic degree distribution. Then we give two applications: for a multi-type version of the Barab\'asi--Albert random graph, and for a preferential attachment model with Poisson number of edges. These examples show a new phenomenon: in the multi-type case it can happen that the asymptotic degree distribution is not deterministic, which is the case in many well-known models in the single-type case. We show that the asymptotic degree distribution in the generalized Barab\'asi--Albert random graph and in the model of independent edges also depends on the asymptotic proportion of edges of type $k$ which makes it a stochastic distribution.

The scale-free property of random graph models is a well-studied feature in the single-type case and also very important in different applications, see e.g.\ \cite{Hofstad}. We generalize this property in the multi-type case, and calculate the generalized characteristic exponent in the multi-type Barab\'asi--Albert random graph and in the model of independent edges.

{\bf Outline.} In Section \ref{sec_NotationsAssumption}, we list the notation and the assumptions on the general model. In Section \ref{sec_MainResults}, we formulate the main results, and we introduce two random graphs, which are special cases of the general model: the generalized Barab\'asi--Albert random graph and the model of independent edges. In Section 4, the proofs of the main theorems are given. Finally, we generalize the scale-free property of random graphs to the multi-type case in Section 5.

\section{Notation and assumptions}
\label{sec_NotationsAssumption}
\subsection{Notation}
Let $\left(G_{n}\right)_{n=0}^{\infty}$ be a sequence of finite random graphs. The vertex set and the edge set of $G_{n}$ are denoted by $V_{n}$ and $E_{n}$, respectively. In the sequel, $N$ will be fixed, this is the number of possible types of edges. For every $k\in[N]=\{1,\dots,N\}$ let $E_{n}^{(k)}$ denote the set of edges with type $k$ in $G_{n}$. For every $n$ we have $E_{n}=\bigcup_{k\in[N]}E_{n}^{(k)}$, and we assume that $E_{n}^{(k)}\subseteq{}E_{n+1}^{(k)}$ for every $k\in[N]$.

\begin{definition}
	For every $n$ the generalized degree of a vertex $v\in{}V_{n}$ in the $n$th step is $\mathbf{deg}_{n}(v)=\left(\mathrm{deg}_{n}^{(k)}(v)\right)_{k=1}^{N}$, where $\mathrm{deg}_{n}^{(k)}(v)$ is the number of edges of type $k$ connected to $v$ in $G_{n}$.
\end{definition}

The initial configuration is denoted by $G_{0}=(V_{0},E_{0})$, where $V_{0}=\{u_{1},u_{2},\dots,u_{s}\}$ ($s\geq{}1$). We allow multiple edges, but loops are forbidden. We assume that for every $k\in[N]$ we have $\big|E_{0}^{(k)}\big|>0$.

For every $n$, in the $n$th step,
\begin{enumerate}
	\item{}a new vertex $v_{n}$ is born, thus $V_{n}=V_{0}\cup\{v_{1},v_{2},\dots,v_{n}\}$;
	\item{}the new vertex $v_{n}$ attaches with a few edges to some of the old vertices, so every element of the edge set $E_{n}\setminus{}E_{n-1}$ is connected to $v_{n}$;
	\item{}every new edge gets a type randomly. For example, we can consider the following case: for every $n$, in the $n$th step, any edge between the new vertex $v_{n}$ and an existing vertex $v\in{}V_{n-1}$ will be assigned to type $k$ with probabilities proportional to $\textrm{deg}_{n-1}^{(k)}(v)$ for every $k\in[N]$.
\end{enumerate}

For every $\boldsymbol{d}\in(\mathbb{Z}_{0}^{+})^{N}=\left\{(x_{1},\dots,x_{N})\in\mathbb{Z}^{N}:x_{k}\geq{}0\textrm{ for every }k\in[N]\right\}$ we define
\begin{align*}
	X_{n}(\boldsymbol{d})=\left|\{v\in{}V_{n}:\mathbf{deg}_{n}(v)=\boldsymbol{d}\}\right|,
\end{align*}
this is the number of vertices in $G_{n}$ with generalized degree $\boldsymbol{d}$. Finally, for every $n\geq{}1$ let $\mathcal{F}_{n}$ denote the $\sigma$-algebra generated by the first $n$ graphs, and let $\mathcal{F}_{0}$ be the trivial $\sigma$-algebra, thus $\mathcal{F}=(\mathcal{F}_{n})_{n=0}^{\infty}$ is a filtration.

Throughout the paper $\boldsymbol{e}_{k}$ will be the $k$th unit vector in $(\mathbb{Z}_{0}^{+})^{N}$.

\subsection{Assumptions}\ Now we list the assumptions we are going to use throughout the paper.

\textbf{Assumption 1.}\ For every $n\geq{}1$ we assume that in the $n$th step, conditionally with respect to $\mathcal{F}_{n-1}$, the conditional distribution of the number of new edges of type $k$ connected to an existing vertex $v\in{}V_{n-1}$ depends only on $\mathrm{deg}_{n-1}^{(k)}(v)$ for every $k\in[N]$. By using this assumption, for every $\boldsymbol{d},\boldsymbol{\gamma}\in(\mathbb{Z}_{0}^{+})^{N}$ let $p_{\boldsymbol{d}}^{(n)}(\boldsymbol{\gamma})$ denote the conditional probability that, with respect to $\mathcal{F}_{n-1}$, a vertex with generalized degree $\boldsymbol{d}$ gets exactly $\gamma_{k}$ edges of type $k$ in the $n$th step.

\textbf{Assumption 2.}\ For every $\boldsymbol{d}\in(\mathbb{Z}_{0}^{+})^{N}$, there exists $\delta>0$ and $C>0$ such that
\begin{align*}
	\mathbb{E}\left(\big|X_{n}(\boldsymbol{d})-X_{n-1}(\boldsymbol{d})\big|^{2}\Big|\mathcal{F}_{n-1}\right)\leq{}Cn^{1-\delta}
\end{align*}
for every $n$.

\textbf{Assumption 3.}\ For every $n\geq{}1$ and $\boldsymbol{d}\in(\mathbb{Z}_{0}^{+})^{N}$ we define the sequence $u_{n}(\boldsymbol{d})$ by
\begin{align*}
	p_{\boldsymbol{d}}^{(n)}(\boldsymbol{0})&=1-\frac{u_{n}(\boldsymbol{d})}{n}.
\end{align*}
This is a nonnegative predictable process with respect to the filtration $\mathcal{F}$. We assume that there exists a positive random variable $u(\boldsymbol{d})$ such that $u_{n}(\boldsymbol{d})\to{}u(\boldsymbol{d})$ almost surely as $n\to\infty$.

For every $\boldsymbol{d}\in(\mathbb{Z}_{0}^{+})^{N}$ let us have
\begin{align*}
	H(\boldsymbol{d})=\left\{\boldsymbol{i}=(i_{1},\dots,i_{N})\in(\mathbb{Z}_{0}^{+})^{N}:\sum_{k=1}^{N}i_{k}\geq{}1\right\}.
\end{align*}

\textbf{Assumption 4.}\ For every $\boldsymbol{d}\in(\mathbb{Z}_{0}^{+})^{N}$, where $\sum_{k=1}^{N}d_{k}\geq{}1$, and for every $\boldsymbol{i}\in{}H(\boldsymbol{d})$ there exist nonnegative random variables denoted by $r^{(k)}(\boldsymbol{d}-\boldsymbol{e}_{k})$ such that
\begin{align*}
	\lim_{n\to\infty}np_{\boldsymbol{d}-\boldsymbol{i}}^{(n)}(\boldsymbol{i})&=\left\{
		\begin{array}{l l}
			r^{(k)}(\boldsymbol{d}-\boldsymbol{e}_{k}) & \textrm{if $\boldsymbol{i}=\boldsymbol{e}_{k}$,}\\
			0 & \textmd{if $\sum_{k=1}^{N}i_{k}\geq{}2$}
		\end{array}
	\right.
\end{align*}
holds almost surely.

\textbf{Assumption 5.}\ For every $\boldsymbol{d}\in(\mathbb{Z}_{0}^{+})^{N}$ let $q^{(n)}(\boldsymbol{d})$ denote the conditional probability (with respect to $\mathcal F_{n-1}$) that the new vertex $v_{n}$ attaches to the existing vertices with exactly $d_{k}$ edges of type $k$. We assume that there exists a nonnegative random variable $q(\boldsymbol{d})$ such that $q^{(n)}(\boldsymbol{d})\to{}q(\boldsymbol{d})$ almost surely as $n\to\infty$.

\section{Main results}
\label{sec_MainResults}
\subsection{Asymptotic degree distribution in the general model}
Now we can formulate our general theorem on the asymptotic degree distribution.
\begin{theorem}
	\label{thm_ADD_GM}
	If a random sequence of graphs with multi-type edges satisfies the assumptions above, then for every $\boldsymbol{d}\in(\mathbb{Z}_{0}^{+})^{N}$ we have
	\begin{align*}
		\lim_{n\to\infty}\frac{X_{n}(\boldsymbol{d})}{|V_{n}|}&=x(\boldsymbol{d})\textrm{ a.s.}
	\end{align*}
	The random variables $x(\boldsymbol{d})$ satisfy the following recurrence equation for every $\boldsymbol{d}\in(\mathbb{Z}_{0}^{+})^{N}$:
	\begin{align*}
		x(\boldsymbol{d})&=\frac{1}{u(\boldsymbol{d})+1}\left[\sum_{k=1}^{N}r^{(k)}(\boldsymbol{d}-\boldsymbol{e}_{k})x(\boldsymbol{d}-\boldsymbol{e}_{k})+q(\boldsymbol{d})\right].
	\end{align*}
\end{theorem}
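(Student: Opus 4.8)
The plan is to prove both assertions simultaneously by induction on the coordinate sum $m=\sum_{k=1}^N d_k$, with the convention that any quantity indexed by a vector with a negative coordinate is $0$ (no vertex has negative degree). Since $|V_n|=n+s$, it suffices to show $X_n(\boldsymbol{d})/n\to x(\boldsymbol{d})$ almost surely. I would begin with a one-step analysis. Because edges are never deleted, degrees are coordinatewise non-decreasing, so a vertex of $V_{n-1}$ that has generalized degree $\boldsymbol{d}$ in step $n$ must have had some degree $\boldsymbol{d}-\boldsymbol{i}$ with $\boldsymbol{0}\leq\boldsymbol{i}\leq\boldsymbol{d}$ in step $n-1$. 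Splitting $X_n(\boldsymbol{d})-X_{n-1}(\boldsymbol{d})$ into the indicator that the newborn vertex $v_n$ has degree $\boldsymbol{d}$ and the net change among the vertices of $V_{n-1}$, grouping the latter by their degree in step $n-1$, and invoking Assumption 1 together with the definitions of $u_n$, $p^{(n)}$ and $q^{(n)}$, I obtain
\begin{align*}
	\mathbb{E}\left(X_n(\boldsymbol{d})\mid\mathcal{F}_{n-1}\right)=\left(1-\frac{u_n(\boldsymbol{d})}{n}\right)X_{n-1}(\boldsymbol{d})+b_n(\boldsymbol{d}),
\end{align*}
where $b_n(\boldsymbol{d})=\sum_{\boldsymbol{0}\neq\boldsymbol{i}\leq\boldsymbol{d}}X_{n-1}(\boldsymbol{d}-\boldsymbol{i})\,p_{\boldsymbol{d}-\boldsymbol{i}}^{(n)}(\boldsymbol{i})+q^{(n)}(\boldsymbol{d})$ is predictable and the sum is finite.

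The core of the proof is a convergence lemma for recursions of this shape: if $(Y_n)$ is nonnegative and adapted, $(c_n)$ and $(b_n)$ are predictable with $c_n\to c>0$ and $b_n\to b$ almost surely, $\mathbb{E}(Y_n\mid\mathcal{F}_{n-1})=(1-c_n/n)Y_{n-1}+b_n$, and $\mathbb{E}(|Y_n-Y_{n-1}|^2\mid\mathcal{F}_{n-1})\leq Cn^{1-\delta}$, then $Y_n/n\to b/(c+1)$ almost surely. To prove it I would introduce the predictable product $\pi_n=\prod_{j=1}^n(1-c_j/j)$ and set $W_n=Y_n/\pi_n$; since $\pi_n=(1-c_n/n)\pi_{n-1}$, a direct computation gives $\mathbb{E}(W_n\mid\mathcal{F}_{n-1})=W_{n-1}+b_n/\pi_n$, so that $M_n=W_n-\sum_{j=1}^n b_j/\pi_j$ is a martingale with $\mathbb{E}((\Delta M_n)^2\mid\mathcal{F}_{n-1})=\pi_n^{-2}\,\mathrm{Var}(Y_n\mid\mathcal{F}_{n-1})$. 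From $c_n\to c$ one gets $\pi_n=n^{-c+o(1)}$ almost surely, so by Assumption 2 the series $\sum_n n^{-2(c+1-\varepsilon)}\mathbb{E}((\Delta M_n)^2\mid\mathcal{F}_{n-1})$ is finite for any $\varepsilon<\delta/2$; the $L^2$ martingale convergence theorem applied to $\sum_j j^{-(c+1-\varepsilon)}\Delta M_j$ followed by Kronecker's lemma yields $M_n=o(n^{c+1-\varepsilon})$, with enough margin to absorb the $o(1)$ in the exponent of $\pi_n$ and conclude $\pi_n M_n/n\to0$. Meanwhile the Stolz--Ces\`aro theorem applied to $\sum_{j=1}^n b_j/\pi_j$ against $n/\pi_n$ gives $\frac{\pi_n}{n}\sum_{j=1}^n b_j/\pi_j\to b/(c+1)$; adding the two contributions gives $Y_n/n=\frac{\pi_n}{n}W_n\to b/(c+1)$.

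It then remains to run the induction. For the base case $\boldsymbol{d}=\boldsymbol{0}$ the sum in $b_n(\boldsymbol{0})$ is empty, so $b_n(\boldsymbol{0})=q^{(n)}(\boldsymbol{0})\to q(\boldsymbol{0})$ by Assumption 5, and the lemma with $c=u(\boldsymbol{0})$ gives $x(\boldsymbol{0})=q(\boldsymbol{0})/(u(\boldsymbol{0})+1)$. Assuming the statement for all degrees of coordinate sum less than $m$ and fixing $\boldsymbol{d}$ with $\sum_k d_k=m$, every term $\boldsymbol{d}-\boldsymbol{i}$ appearing in $b_n(\boldsymbol{d})$ has smaller coordinate sum, hence $X_{n-1}(\boldsymbol{d}-\boldsymbol{i})/n\to x(\boldsymbol{d}-\boldsymbol{i})$ by the induction hypothesis. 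Writing $X_{n-1}(\boldsymbol{d}-\boldsymbol{i})\,p_{\boldsymbol{d}-\boldsymbol{i}}^{(n)}(\boldsymbol{i})=\frac{X_{n-1}(\boldsymbol{d}-\boldsymbol{i})}{n}\cdot np_{\boldsymbol{d}-\boldsymbol{i}}^{(n)}(\boldsymbol{i})$ and applying Assumption 4, the factor $np_{\boldsymbol{d}-\boldsymbol{i}}^{(n)}(\boldsymbol{i})$ converges to $r^{(k)}(\boldsymbol{d}-\boldsymbol{e}_k)$ when $\boldsymbol{i}=\boldsymbol{e}_k$ and to $0$ when $\sum_k i_k\geq2$; together with Assumption 5 this yields $b_n(\boldsymbol{d})\to\sum_{k=1}^N r^{(k)}(\boldsymbol{d}-\boldsymbol{e}_k)\,x(\boldsymbol{d}-\boldsymbol{e}_k)+q(\boldsymbol{d})$. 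The lemma with $c=u(\boldsymbol{d})$ then gives $X_n(\boldsymbol{d})/n\to\frac{1}{u(\boldsymbol{d})+1}\big[\sum_k r^{(k)}(\boldsymbol{d}-\boldsymbol{e}_k)x(\boldsymbol{d}-\boldsymbol{e}_k)+q(\boldsymbol{d})\big]$, which is exactly the claimed recurrence.

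I expect the main obstacle to be the convergence lemma, and specifically the negligibility of the martingale $M_n$ when the limiting coefficients $c=u(\boldsymbol{d})$ and $b$ are genuinely random. The normalizing product $\pi_n$ then carries a random exponent, so the estimate $\pi_n=n^{-c+o(1)}$, the summability of the rescaled quadratic variation, and the cancellation $\pi_n M_n/n\to0$ all have to be carried out pathwise on the almost sure event where $u_n(\boldsymbol{d})\to u(\boldsymbol{d})$. This is precisely where the polynomial gain $n^{1-\delta}$ in Assumption 2 (rather than the trivial $O(n)$ bound) is essential: it leaves the slack $\varepsilon<\delta/2$ needed both to keep the quadratic variation summable and to beat the $o(1)$ fluctuation in the exponent of $\pi_n$. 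A minor additional point is justifying the interchange of limit and summation in $b_n(\boldsymbol{d})$, which is harmless because only the finitely many $\boldsymbol{i}$ with $\boldsymbol{0}\neq\boldsymbol{i}\leq\boldsymbol{d}$ contribute.
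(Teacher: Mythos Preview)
Your overall architecture is exactly the paper's: induction on $\sum_k d_k$, the same one–step decomposition
\[
\mathbb{E}\bigl(X_n(\boldsymbol d)\mid\mathcal F_{n-1}\bigr)=\Bigl(1-\tfrac{u_n(\boldsymbol d)}{n}\Bigr)X_{n-1}(\boldsymbol d)+v_n(\boldsymbol d),
\]
and the identification of $\lim v_n(\boldsymbol d)$ via Assumptions 4, 5 and the induction hypothesis. The paper then invokes a ready-made convergence lemma (their Lemma~\ref{lemma_Agi_Tamas}, taken from \cite{Backhausz_Mori}) and is done; you instead reprove that lemma from scratch via the predictable product $\pi_n$, a martingale/Kronecker argument for the fluctuation, and Stolz--Ces\`aro for the drift. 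That is a perfectly standard and correct way to establish such a lemma, and your identification of where the $n^{1-\delta}$ slack is used is exactly right.

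One technical point worth tightening: when you bound the quadratic variation you weight the martingale increments by $j^{-(c+1-\varepsilon)}$ with $c=u(\boldsymbol d)$ \emph{random}, so the weighted sum is not literally a martingale and the ``$L^2$ martingale convergence'' you invoke does not apply as stated. The clean fix is to use the \emph{predictable} weight $a_j=\pi_j/j$ instead: then $\tilde M_n=\sum_{j\le n}a_j\,\Delta M_j$ is a genuine martingale with predictable quadratic variation
\[
\sum_{j\ge1}\Bigl(\tfrac{\pi_j}{j}\Bigr)^{2}\,\tfrac{\mathrm{Var}(Y_j\mid\mathcal F_{j-1})}{\pi_j^{2}}\ \le\ C\sum_{j\ge1} j^{-1-\delta}<\infty,
\]
so $\tilde M_n$ converges a.s., and Kronecker with $b_n=n/\pi_n$ gives $\pi_nM_n/n\to 0$ directly, with no need to juggle the random exponent. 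With that adjustment your proof is complete and matches the paper's in every other respect.
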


\begin{remark}
	Notice that we have $x(\boldsymbol{d})=0$ if for any $k\in[N]$ we have $d_{k}<0$.
\end{remark}

\subsection{Generalized Barab\'asi--Albert random graph}
\label{model_description_gen_BA}
This is a multi-type version
and a generalization (or modification) of the graph model in \cite{BarabasiAlbert_Albert}, specified in \cite{Bollobas_Riordan_Spencer_Tusnady} (see also \cite{Hofstad,Fazekas_Noszly_Perecsenyi,Ostroumova_Ryabchenko_Samosvat} for general setups). The dynamics of this model is the following: for every $n\geq{}1$, in the $n$th step, the new vertex $v_{n}$ attaches with $M_{n}$ (not necessarily different) edges to some of the old vertices, where $M_{n}$ is a positive integer valued random variable, which is independent of $\mathcal{F}_{n-1}$. The endpoints of the $M_{n}$ edges are chosen independently. The endpoint of each edge is chosen among the existing vertices with probabilities proportional to the degrees. Notice that we do not update degrees until the end of step. The types of the new edges are chosen independently, and the probability of each type is its proportion among the edges of the already existing endpoint of the new edge (not counting the edges added in the actual step).

Now, we list the assumptions on the sequence of random variables $(M_{n})_{n=1}^{\infty}$.

\textbf{Assumption (BA1)} $M_{n}$ is a positive integer valued random variable, which is independent of $\mathcal{F}_{n-1}$ for every $n\geq{}1$.

\textbf{Assumption (BA2)} We assume that there exists a positive random variable $M$ such that $M_{n}\to{}M$ in distribution, and for every $p\geq{}1$ we have $\mathbb{E}(M_{n}^{p})\to\mathbb{E}(M^{p})<\infty$ as $n\to\infty$. The expected value of $M$ will be denoted by $m=\mathbb{E}(M)$.

We need the following lemma to understand the asymptotics of the proportion of edges of type $k$ as the number of steps goes to infinity.

\begin{lemma}
	\label{lemma_AER_GBA}
	For every $k\in[N]$ let us have $\zeta_{n}^{(k)}=\frac{\big|E_{n}^{(k)}\big|}{|E_{n}|}$, i.e.\ the proportion of the number of edges of type $k$ in the generalized Barab\'asi--Albert random graph. For every $k\in[N]$ there exists a random variable $\zeta^{(k)}$ such that $\zeta_{n}^{(k)}\to\zeta^{(k)}$ almost surely as $n\to\infty$.
\end{lemma}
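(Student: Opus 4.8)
The plan is to show that, for each fixed $k\in[N]$, the process $\bigl(\zeta_n^{(k)}\bigr)_{n\geq 0}$ is a martingale with respect to $\mathcal{F}$. Since $0\leq\zeta_n^{(k)}\leq 1$ by definition, it is automatically bounded, so once the martingale identity $\mathbb{E}\bigl(\zeta_n^{(k)}\mid\mathcal{F}_{n-1}\bigr)=\zeta_{n-1}^{(k)}$ is established, the martingale convergence theorem yields a finite random variable $\zeta^{(k)}$ with $\zeta_n^{(k)}\to\zeta^{(k)}$ almost surely (and in fact in $L^p$ for every $p$). Note that $|E_{n-1}|\geq\big|E_0^{(k)}\big|>0$ by the standing assumption, so each ratio is well defined.

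The first step is to compute the conditional probability that a single new edge is assigned type $k$. Writing $\mathrm{deg}_{n-1}(w)=\sum_{j=1}^{N}\mathrm{deg}_{n-1}^{(j)}(w)$ for the total degree, the edge attaches to an existing vertex $w\in V_{n-1}$ with probability $\mathrm{deg}_{n-1}(w)/(2|E_{n-1}|)$, and is then declared type $k$ with probability $\mathrm{deg}_{n-1}^{(k)}(w)/\mathrm{deg}_{n-1}(w)$. Summing over $w$, the total-degree factors cancel, and since $\sum_{w}\mathrm{deg}_{n-1}^{(k)}(w)=2\big|E_{n-1}^{(k)}\big|$, the marginal probability of type $k$ is exactly $\zeta_{n-1}^{(k)}$. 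This cancellation, which is the precise reason the degree-proportional attachment rule and the type-proportional labelling rule are compatible, is the heart of the argument. Because the $M_n$ edges choose their endpoints and types independently given $\mathcal{F}_{n-1}$ and $M_n$, the number $Z_n^{(k)}$ of new type-$k$ edges is, conditionally on $\{M_n=m\}$, a $\mathrm{Binomial}(m,\zeta_{n-1}^{(k)})$ variable, whence $\mathbb{E}\bigl(Z_n^{(k)}\mid\mathcal{F}_{n-1},M_n=m\bigr)=m\,\zeta_{n-1}^{(k)}$.

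The main obstacle is that $\zeta_n^{(k)}=\bigl(\big|E_{n-1}^{(k)}\big|+Z_n^{(k)}\bigr)/\bigl(|E_{n-1}|+M_n\bigr)$ is a ratio in which both numerator and denominator are random, so the conditional expectation does not pass through the ratio directly. I would resolve this by conditioning first on the value of $M_n$: on $\{M_n=m\}$ the denominator equals the $\mathcal{F}_{n-1}$-measurable quantity $|E_{n-1}|+m$, and combining $\big|E_{n-1}^{(k)}\big|=\zeta_{n-1}^{(k)}|E_{n-1}|$ with the mean of $Z_n^{(k)}$ computed above gives numerator mean $\zeta_{n-1}^{(k)}\bigl(|E_{n-1}|+m\bigr)$, so the conditional expectation is $\zeta_{n-1}^{(k)}$ for every $m$. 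Averaging over the law of $M_n$, which is independent of $\mathcal{F}_{n-1}$ by (BA1), preserves this, establishing the martingale identity exactly. The moment hypotheses (BA2) are not needed for this lemma; they enter only in the later identification of the limit.
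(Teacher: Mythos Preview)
Your proposal is correct and follows essentially the same approach as the paper: both arguments show that $(\zeta_n^{(k)})$ is a bounded nonnegative martingale by first conditioning on $M_n$ (the paper writes this as conditioning on $\mathcal{F}_{n-1}^{+}=\sigma(\mathcal{F}_{n-1},M_n)$), observing that the number of new type-$k$ edges is then $\mathrm{Bin}(M_n,\zeta_{n-1}^{(k)})$, and simplifying the resulting ratio. Your derivation of the single-edge type-$k$ probability from the attachment and labelling rules is a welcome extra detail that the paper leaves implicit.
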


\begin{remark}
	If we have $M_{n}\equiv{}1$ for all $n\geq{}1$, and the initial configuration is a tree, i.e.\ the model is an $N$-type Barab\'asi--Albert random tree, then $\left(\zeta^{(k)},k\in[N]\right)$ has a Dirichlet distribution with parameters $\left(|E_{0}^{(k)}|,k\in[N]\right)$. In this case the number of edges with different types follows a P\'olya urn process.
\end{remark}

\textbf{Asymptotic degree distribution in the generalized Barab\'asi--Albert random graph.}
\label{thm_gen_BA}
\begin{theorem}
	\label{thm_ADD_gen_BA}
	If the assumptions on the sequence $(M_{n})_{n=1}^{\infty}$ are satisfied, then in the generalized Barab\'asi--Albert model for every $\boldsymbol{d}\in(\mathbb{Z}_{0}^{+})^{N}$ we have
	\begin{align*}
		\lim_{n\to\infty}\frac{X_{n}(\boldsymbol{d})}{|V_{n}|}&=x(\boldsymbol{d})\textrm{ a.s.}
	\end{align*}
	The random variables $x(\boldsymbol{d})$ satisfy the following recurrence equation for every $\boldsymbol{d}\in(\mathbb{Z}_{0}^{+})^{N}$:
	\begin{align*}
		x(\boldsymbol{d})&=\sum_{k=1}^{N}\frac{d_{k}-1}{D+2}x(\boldsymbol{d}-\boldsymbol{e}_{k})+\frac{2}{D+2}\mathbb{P}\left(M=D\right)\frac{D!}{\prod_{k=1}^{N}d_{k}!}\prod_{k=1}^{N}\left(\zeta^{(k)}\right)^{d_{k}},
	\end{align*}
	where $\zeta^{(k)}$ is defined in Lemma $\ref{lemma_AER_GBA}$ and $D=\boldsymbol{d}^{T}\boldsymbol{1}=\sum_{k=1}^{N}d_{k}$.
\end{theorem}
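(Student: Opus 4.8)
The plan is to verify that the generalized Barab\'asi--Albert model satisfies Assumptions 1--5, so that Theorem \ref{thm_ADD_GM} applies, and then to identify the limiting quantities $u(\boldsymbol{d})$, $r^{(k)}(\boldsymbol{d}-\boldsymbol{e}_k)$ and $q(\boldsymbol{d})$ and substitute them into the general recurrence. The computational backbone is a single observation: conditionally on $\mathcal{F}_{n-1}$, each of the $M_n$ edges of $v_n$ independently picks an endpoint $w$ with probability $\mathrm{deg}_{n-1}(w)/(2|E_{n-1}|)$ and is then declared to have type $k$ with probability $\mathrm{deg}_{n-1}^{(k)}(w)/\mathrm{deg}_{n-1}(w)$. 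Hence a single edge becomes a type-$k$ edge landing on a fixed vertex $v$ of generalized degree $\boldsymbol{d}$ with probability $d_k/(2|E_{n-1}|)$, which depends on $\boldsymbol{d}$ only through $d_k$; this is Assumption 1. The same product, summed over endpoints, shows that marginally each edge of $v_n$ has type $k$ with probability $\sum_w \mathrm{deg}_{n-1}^{(k)}(w)/(2|E_{n-1}|)=\zeta_{n-1}^{(k)}$, and since the (endpoint, type) pairs are independent across the $M_n$ edges, the types themselves are independent with common law $(\zeta_{n-1}^{(k)})_{k}$.

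First I would record the normalization. Since the $M_j$ are independent with $\mathbb{E}(M_j^2)\to\mathbb{E}(M^2)<\infty$ by Assumption (BA2), their variances are uniformly bounded, so Kolmogorov's strong law gives $|E_{n-1}|/n=(|E_0|+\sum_{j<n}M_j)/n\to m$ almost surely. Combined with the per-attachment probability above and $\mathbb{E}(M_n)\to m$, a one-step computation yields the leading order of the conditional probabilities: the expected number of type-$k$ edges landing on a degree-$\boldsymbol{d}$ vertex is $\mathbb{E}(M_n)\,d_k/(2|E_{n-1}|)$, and the probability that it receives at least one edge is $\mathbb{E}(M_n)\,D/(2|E_{n-1}|)+O(\mathbb{E}(M_n^2)n^{-2})$, where $D=\sum_k d_k$. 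All corrections arise from the event of two or more incident edges and vanish after multiplication by $n$. This simultaneously gives $u(\boldsymbol{d})=\lim_n n(1-p_{\boldsymbol{d}}^{(n)}(\boldsymbol{0}))=D/2$ (Assumption 3, positive for $D\geq1$; degree-$\boldsymbol{0}$ vertices never occur, so that case is vacuous), $r^{(k)}(\boldsymbol{d}-\boldsymbol{e}_k)=\lim_n n\,p_{\boldsymbol{d}-\boldsymbol{e}_k}^{(n)}(\boldsymbol{e}_k)=(d_k-1)/2$, and $\lim_n n\,p_{\boldsymbol{d}-\boldsymbol{i}}^{(n)}(\boldsymbol{i})=0$ whenever $\sum_k i_k\geq 2$, which is Assumption 4.

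For Assumption 2, I would use the crude bound $|X_n(\boldsymbol{d})-X_{n-1}(\boldsymbol{d})|\leq M_n+1$, since only the new vertex and the at most $M_n$ existing endpoints can change their generalized degree in step $n$; taking conditional second moments and using the $\mathcal{F}_{n-1}$-independence of $M_n$ together with $\sup_n\mathbb{E}(M_n^2)<\infty$ gives a uniform constant bound, which is $\leq Cn^{1-\delta}$ for any $\delta\in(0,1)$. For Assumption 5, the independent type structure shows that, conditionally on $M_n=D$, the type counts of $v_n$ are multinomial with parameters $D$ and $(\zeta_{n-1}^{(k)})_k$, so $q^{(n)}(\boldsymbol{d})=\mathbb{P}(M_n=D)\,\frac{D!}{\prod_k d_k!}\prod_k(\zeta_{n-1}^{(k)})^{d_k}$; letting $n\to\infty$ and using $\mathbb{P}(M_n=D)\to\mathbb{P}(M=D)$ for integer-valued variables together with $\zeta_{n-1}^{(k)}\to\zeta^{(k)}$ from Lemma \ref{lemma_AER_GBA} identifies $q(\boldsymbol{d})$ as the claimed multinomial expression. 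With all assumptions verified, Theorem \ref{thm_ADD_GM} gives almost sure convergence and the recurrence $x(\boldsymbol{d})=(u(\boldsymbol{d})+1)^{-1}\big[\sum_k r^{(k)}(\boldsymbol{d}-\boldsymbol{e}_k)x(\boldsymbol{d}-\boldsymbol{e}_k)+q(\boldsymbol{d})\big]$; substituting $u(\boldsymbol{d})=D/2$ and $r^{(k)}(\boldsymbol{d}-\boldsymbol{e}_k)=(d_k-1)/2$ and clearing the factor $2/(D+2)$ produces exactly the stated equation.

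I expect the main obstacle to be the careful justification that the $O(n^{-2})$ remainders are genuinely negligible after multiplying by $n$ and that the limits hold almost surely rather than merely in expectation: one must control the random normalization $|E_{n-1}|$ uniformly via the strong law above, and argue that the contribution of vertices receiving two or more edges in a single step is asymptotically irrelevant, uniformly enough to pass to the almost sure limit. The algebraic matching of the parameters to the target recurrence is then routine.
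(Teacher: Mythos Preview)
Your proposal is correct and follows essentially the same route as the paper: verify Assumptions 1--5 for the generalized Barab\'asi--Albert model, identify $u(\boldsymbol{d})=D/2$, $r^{(k)}(\boldsymbol{d}-\boldsymbol{e}_k)=(d_k-1)/2$, and $q(\boldsymbol{d})$ via the multinomial structure and Lemma~\ref{lemma_AER_GBA}, then invoke Theorem~\ref{thm_ADD_GM}. The only cosmetic difference is that the paper packages your ``$O(n^{-2})$ remainder'' arguments through an explicit Bonferroni-type bound (Lemma~\ref{lemma_Approximation}), $|(1-x)^n-(1-nx)|\le\binom{n}{2}x^2$, applied with $x=D/(2|E_{n-1}|)$ and $n=M_n$, which is exactly the rigorous form of the control you flag as the main obstacle in your final paragraph.
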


\subsection{Model of independent edges}
\label{model_description_MIE}
This model is a modification and a multi-type version of the models in \cite{Dereich_Morters} and \cite{Katona_Mori}, where the new vertex is connected to the old ones independently, with probability depending on the edges of the actual vertex. Instead of connecting with a single edge with a given probability, we add a Poisson number of new edges, with the multiplicative parameter randomly chosen.

In this model, we have the following dynamics: for every $n\geq{}1$, in the $n$th step, the new vertex $v_{n}$ attaches to all of the old vertices with some edges of type $k$ independently. For any existing vertex $w\in{}V_{n-1}$ let $\Delta_{n}^{(k)}(w)$ denote the number edges of type $k$ between the vertices $v_{n}$ and $w$. We assume that, conditionally with respect to $\mathcal{F}_{n-1}$, for every $k\in[N]$ we have
\begin{align*}
	\Delta_{n}^{(k)}(w)\sim\textrm{Poi}\left(\lambda_{n}\frac{\textrm{deg}_{n-1}^{(k)}(v)}{2|E_{n-1}|}\right),
\end{align*}
where $\lambda_{n}$ is a positive random variable. We also assume that for every $w$, the random variables $\left(\Delta_{n}^{(k)}(w)\right)_{k=1}^{N}$ are conditionally independent with respect to $\mathcal{F}_{n-1}$.

Let $\lambda_{1},\lambda_{2},\lambda_{3},\dots$ be a sequence of independent random variables. Similarly to the previous case, we need a few assumptions on their distribution.

\textbf{Assumption (IE1)} For every $n\geq{}1$ the random variable $\lambda_{n}$ is positive and independent of $\mathcal{F}_{n-1}$.

\textbf{Assumption (IE2)} We assume that there exists a positive random variable $\lambda$ such that $\lambda_{n}\to\lambda$ in distribution, and for every $p\geq{}1$ we have $\mathbb{E}(\lambda_{n}^{p})\to\mathbb{E}(\lambda^{p})<\infty$ as $n\to\infty$. The expected value and the variance of $\lambda$ will be denoted by $\mu=\mathbb{E}(\lambda)$ and $\sigma^{2}=\textrm{Var}(\lambda)$, respectively.

For every $n\geq{}1$ we define $\mathcal{F}_{n-1}^{+}=\sigma(\mathcal{F}_{n-1},\lambda_{n})$. Let $\Delta_{n}$ be the number of new edges in the $n$th step, and let $\Delta_{n}^{(k)}$ denote the number of new edges of type $k$ in the $n$th step. For every $n\geq{}1$ we have $\Delta_{n}|\mathcal{F}_{n-1}^{+}\sim\textrm{Poi}(\lambda_{n})$, furthermore for every $k\in[N]$ we have
\begin{align*}
	\Delta_{n}^{(k)}|\mathcal{F}_{n-1}^{+}\sim\textrm{Poi}\left(\lambda_{n}\frac{|E_{n-1}^{(k)}|}{|E_{n-1}|}\right).
\end{align*}
Note that $\left(\Delta_{n}^{(k)}\right)_{k=1}^{N}$ are conditionally independent given $\mathcal{F}_{n-1}^{+}$.

Again, we need the following lemma to understand the asymptotics of the proportion of edges of type $k$ as the number of steps goes to infinity.

\begin{lemma}
	\label{lemma_AER_MIE}
	For every $k\in[N]$ let us have $\hat{\zeta}_{n}^{(k)}=\frac{\big|E_{n}^{(k)}\big|}{|E_{n}|}$, i.e.\ the proportion of the number of edges of type $k$ in the model of independent edges. For every $k\in[N]$ there exists a random variable $\hat{\zeta}^{(k)}$ such that $\hat{\zeta}_{n}^{(k)}\to\hat{\zeta}^{(k)}$ almost surely as $n\to\infty$.
\end{lemma}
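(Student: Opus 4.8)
The plan is to show that, for each fixed $k\in[N]$, the proportion $\hat{\zeta}_{n}^{(k)}=|E_{n}^{(k)}|/|E_{n}|$ is a martingale with respect to the filtration $\mathcal{F}$ that is bounded in $[0,1]$, whence almost sure convergence follows from Doob's martingale convergence theorem. The whole argument rests on the Poisson structure of the model: since the numbers of new edges of the different types are conditionally independent Poisson variables given $\mathcal{F}_{n-1}^{+}$, conditioning additionally on the total number $\Delta_{n}$ of new edges turns the type allocation into a multinomial (equivalently, a batch P\'olya urn) step whose per-edge type probabilities are exactly the current proportions $\hat{\zeta}_{n-1}^{(k)}$. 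This exact proportionality is what produces a cancellation when one passes to the ratio.

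Concretely, write $A_{n}=|E_{n}|$ and $A_{n}^{(k)}=|E_{n}^{(k)}|$, so that $A_{n}=A_{n-1}+\Delta_{n}$ and $A_{n}^{(k)}=A_{n-1}^{(k)}+\Delta_{n}^{(k)}$. Given $\mathcal{F}_{n-1}^{+}$ we have $\Delta_{n}^{(k)}\sim\mathrm{Poi}(\lambda_{n}\hat{\zeta}_{n-1}^{(k)})$, independent over $k$, and hence $\Delta_{n}\sim\mathrm{Poi}(\lambda_{n})$ because $\sum_{k}\hat{\zeta}_{n-1}^{(k)}=1$. By the standard conditioning property of independent Poisson variables, given $\mathcal{F}_{n-1}^{+}$ and $\Delta_{n}=m$ the count $\Delta_{n}^{(k)}$ is $\mathrm{Binomial}(m,\hat{\zeta}_{n-1}^{(k)})$, so $\mathbb{E}(\Delta_{n}^{(k)}\mid\mathcal{F}_{n-1}^{+},\Delta_{n}=m)=m\hat{\zeta}_{n-1}^{(k)}$. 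Using $A_{n-1}^{(k)}=A_{n-1}\hat{\zeta}_{n-1}^{(k)}$, the numerator below equals $\hat{\zeta}_{n-1}^{(k)}(A_{n-1}+m)$, so
\[
\mathbb{E}\!\left(\frac{A_{n}^{(k)}}{A_{n}}\,\Big|\,\mathcal{F}_{n-1}^{+},\,\Delta_{n}=m\right)
=\frac{A_{n-1}^{(k)}+m\hat{\zeta}_{n-1}^{(k)}}{A_{n-1}+m}
=\hat{\zeta}_{n-1}^{(k)},
\]
a value independent of $m$. Averaging over $\Delta_{n}$ and then using the tower property together with $\mathcal{F}_{n-1}\subseteq\mathcal{F}_{n-1}^{+}$ gives $\mathbb{E}(\hat{\zeta}_{n}^{(k)}\mid\mathcal{F}_{n-1})=\hat{\zeta}_{n-1}^{(k)}$.

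Finally, the denominator never vanishes: the assumption $|E_{0}^{(k)}|>0$ together with the monotonicity $E_{n}^{(k)}\subseteq E_{n+1}^{(k)}$ gives $A_{n}\geq A_{0}>0$ for all $n$, so $\hat{\zeta}_{n}^{(k)}$ is well defined and takes values in $[0,1]$. Being a martingale bounded in $[0,1]$, it converges almost surely to a random variable $\hat{\zeta}^{(k)}\in[0,1]$ by Doob's theorem, which is the claim. I expect the only delicate point to be the second paragraph, namely verifying the exact martingale identity, since that is the step where the specific Poisson rates $\lambda_{n}\hat{\zeta}_{n-1}^{(k)}$ are used and where the cancellation must be checked carefully; boundedness and the invocation of martingale convergence are then routine. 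It is worth recording that $\hat{\zeta}^{(k)}$ need not be deterministic, consistent with the stochastic nature of the asymptotic degree distribution emphasized in the main results.
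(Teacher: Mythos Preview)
Your proof is correct and follows essentially the same approach as the paper: both condition on $\mathcal{F}_{n-1}^{+}$ and $\Delta_{n}$ (the paper names this enlarged $\sigma$-algebra $\mathcal{F}_{n-1}^{++}$), use the Poisson-to-binomial conditioning to compute the conditional expectation of the ratio, obtain the martingale property via the tower property, and conclude by martingale convergence. The only cosmetic difference is that the paper first reduces to the case $N=2$ before carrying out the binomial computation, whereas you handle general $N$ directly; this is inessential.
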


\textbf{Asymptotic degree distribution in the model of independent edges.} 
\label{thm_MIE}
\begin{theorem}
	\label{thm_ADD_MIE}
	If the assumptions on the sequence $(\lambda_{n})_{n=1}^{\infty}$ are satisfied, then in the model of independent edges for every $\boldsymbol{d}\in(\mathbb{Z}_{0}^{+})^{N}$ we have
	\begin{align*}
		\lim_{n\to\infty}\frac{X_{n}(\boldsymbol{d})}{|V_{n}|}&=x(\boldsymbol{d})\textrm{ a.s.}
	\end{align*}
	The random variables $x(\boldsymbol{d})$ satisfy the following recurrence equation for every $\boldsymbol{d}\in(\mathbb{Z}_{0}^{+})^{N}$:
	\begin{align*}
		x(\boldsymbol{d})&=\sum_{k=1}^{N}\frac{d_{k}-1}{D+2}x(\boldsymbol{d}-\boldsymbol{e}_{k})+\frac{2}{D+2}\frac{\prod_{k=1}^{N}\left(\hat{\zeta}^{(k)}\right)^{d_{k}}}{\prod_{k=1}^{N}d_{k}!}\mathbb{E}\left(\lambda^{D}e^{-\lambda}\right),
	\end{align*}
	where $\hat{\zeta}^{(k)}$ is defined in Lemma $\ref{lemma_AER_MIE}$ and $D=\boldsymbol{d}^{T}\boldsymbol{1}$.
\end{theorem}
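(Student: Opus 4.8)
The plan is to derive the theorem from the general Theorem~\ref{thm_ADD_GM} by checking Assumptions 1--5 for the model of independent edges and reading off the limiting quantities $u(\boldsymbol{d})$, $r^{(k)}(\boldsymbol{d}-\boldsymbol{e}_k)$ and $q(\boldsymbol{d})$. The basic structural fact is that, conditionally on $\mathcal{F}_{n-1}^{+}=\sigma(\mathcal{F}_{n-1},\lambda_n)$, a vertex $w$ with $\mathbf{deg}_{n-1}(w)=\boldsymbol{\delta}$ receives independent $\mathrm{Poi}\big(\lambda_n\delta_k/(2|E_{n-1}|)\big)$ many new edges of each type $k$, so that averaging the product of the corresponding Poisson weights over $\lambda_n$ (which is independent of $\mathcal{F}_{n-1}$) gives closed expressions for $p_{\boldsymbol{\delta}}^{(n)}(\boldsymbol{\gamma})$ and for $q^{(n)}(\boldsymbol{d})$. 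From here everything reduces to an asymptotic expansion as $n\to\infty$. Assumption 1 is immediate, since the counts $(\Delta_n^{(k)}(w))_{k}$ are conditionally independent and each depends on $w$ only through $\mathrm{deg}_{n-1}^{(k)}(w)$.

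The single analytic input I will need is $|E_n|/n\to\mu$ almost surely, where $\mu=\mathbb{E}(\lambda)$; this follows from Kolmogorov's strong law because the increments $\Delta_n$ are independent with $\mathbb{E}(\Delta_n)=\mathbb{E}(\lambda_n)\to\mu$ and uniformly bounded variances by (IE2), and it also underlies the proof of Lemma~\ref{lemma_AER_MIE}, from which I quote $\hat{\zeta}_n^{(k)}\to\hat{\zeta}^{(k)}$ a.s. Writing $D=\boldsymbol{d}^{T}\boldsymbol{1}$, one has $p_{\boldsymbol{d}}^{(n)}(\boldsymbol{0})=\mathbb{E}\big(e^{-\lambda_n D/(2|E_{n-1}|)}\mid\mathcal{F}_{n-1}\big)$; expanding $1-e^{-x}=x+O(x^2)$, using $n/(2|E_{n-1}|)\to 1/(2\mu)$ and $\mathbb{E}(\lambda_n)\to\mu$, and noting that the quadratic remainder contributes $O\big(n|E_{n-1}|^{-2}\big)=o(1)$ thanks to the bounded second moments of $\lambda_n$, gives $u_n(\boldsymbol{d})\to D/2=:u(\boldsymbol{d})$. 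The same expansion applied to $p_{\boldsymbol{d}-\boldsymbol{e}_k}^{(n)}(\boldsymbol{e}_k)=\mathbb{E}\big(\lambda_n(d_k-1)/(2|E_{n-1}|)\cdot e^{-\lambda_n(D-1)/(2|E_{n-1}|)}\mid\mathcal{F}_{n-1}\big)$ yields $np_{\boldsymbol{d}-\boldsymbol{e}_k}^{(n)}(\boldsymbol{e}_k)\to(d_k-1)/2=:r^{(k)}(\boldsymbol{d}-\boldsymbol{e}_k)$, while for $|\boldsymbol{i}|\ge2$ the probability $p_{\boldsymbol{d}-\boldsymbol{i}}^{(n)}(\boldsymbol{i})$ carries a factor $|E_{n-1}|^{-|\boldsymbol{i}|}=O(n^{-2})$, so $np_{\boldsymbol{d}-\boldsymbol{i}}^{(n)}(\boldsymbol{i})\to0$; this settles Assumptions 3 and 4. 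For Assumption 5 I use $\sum_k\hat{\zeta}_{n-1}^{(k)}=1$ to write
\[
q^{(n)}(\boldsymbol{d})=\frac{\prod_{k=1}^N\big(\hat{\zeta}_{n-1}^{(k)}\big)^{d_k}}{\prod_{k=1}^N d_k!}\,\mathbb{E}\big(\lambda_n^{D}e^{-\lambda_n}\big),
\]
and then let $n\to\infty$ using $\hat{\zeta}_{n-1}^{(k)}\to\hat{\zeta}^{(k)}$ a.s.\ together with $\mathbb{E}\big(\lambda_n^{D}e^{-\lambda_n}\big)\to\mathbb{E}\big(\lambda^{D}e^{-\lambda}\big)$, the latter holding since $\lambda\mapsto\lambda^{D}e^{-\lambda}$ is bounded and continuous on $[0,\infty)$ and $\lambda_n\to\lambda$ in distribution; this identifies $q(\boldsymbol{d})$.

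Assumption 2 is the quantitative step, and I expect it to be the main technical obstacle. Let $Y_n$ be the number of existing vertices that receive at least one new edge, so that $|X_n(\boldsymbol{d})-X_{n-1}(\boldsymbol{d})|\le Y_n+1$. Conditionally on $\mathcal{F}_{n-1}^{+}$ the events $\{\Delta_n(w)\ge1\}$ are independent over $w\in V_{n-1}$ with success probabilities $p_w\le\lambda_n\big(\sum_k\mathrm{deg}_{n-1}^{(k)}(w)\big)/(2|E_{n-1}|)$, and since $\sum_{w}\sum_k\mathrm{deg}_{n-1}^{(k)}(w)=2|E_{n-1}|$ we get $\sum_w p_w\le\lambda_n$; hence $\mathbb{E}(Y_n\mid\mathcal{F}_{n-1}^{+})\le\lambda_n$ and $\mathrm{Var}(Y_n\mid\mathcal{F}_{n-1}^{+})\le\lambda_n$, so that $\mathbb{E}(Y_n^2\mid\mathcal{F}_{n-1})\le\mathbb{E}(\lambda_n)+\mathbb{E}(\lambda_n^2)$ is bounded uniformly in $n$ by (IE2). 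Thus the conditional second moment of $X_n(\boldsymbol{d})-X_{n-1}(\boldsymbol{d})$ is bounded by a constant, which gives Assumption 2 with any $\delta\in(0,1)$. Having verified all five assumptions, I substitute $u(\boldsymbol{d})+1=(D+2)/2$, $r^{(k)}(\boldsymbol{d}-\boldsymbol{e}_k)=(d_k-1)/2$ and the formula for $q(\boldsymbol{d})$ into the recurrence of Theorem~\ref{thm_ADD_GM}, which reproduces the stated equation; the almost sure convergence of $X_n(\boldsymbol{d})/|V_n|$ is then inherited directly from that theorem.
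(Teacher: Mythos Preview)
Your proposal is correct and follows essentially the same route as the paper: verify Assumptions 1--5 of the general model by computing the Poisson transition probabilities, expanding the exponentials to first order using $|E_{n-1}|\sim\mu n$, and invoking Lemma~\ref{lemma_AER_MIE} together with the bounded-continuity of $t\mapsto t^{D}e^{-t}$ for Assumption~5. The only cosmetic differences are that the paper bounds the increment in Assumption~2 by the total number $\Delta_n$ of new edges (rather than your count $Y_n$ of affected vertices) and obtains $|E_n|\sim\mu n$ via a martingale decomposition plus Kolmogorov's theorem rather than directly from independence of the $\Delta_n$; your arguments for both points are equally valid.
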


\begin{remark}
	For the calculation of the last term we can use the following. Let us denote by $g_{\lambda}$ the moment generating function of $\lambda$, i.e.\ $g_{\lambda}(t)=\mathbb{E}(e^{t\lambda})$ ($t\in\mathbb{R}$). Let us have $B=\{t\in\mathbb{R}:g_{\lambda}(t)<\infty\}$, i.e.\ the set of finiteness of $g_{\lambda}$, and let $B_{0}$ be the interior of $B$. Suppose that $-1\in{}B_{0}$. It is well known that in this case $g_{\lambda}(t)$ is infinitely differentiable at $t=-1$, furthermore, we have
	\begin{align*}
		g^{\left(D\right)}_{\lambda}(-1)=\mathbb{E}\left(\lambda^{D}e^{-\lambda}\right),
	\end{align*}
	where $D=\boldsymbol{d}^{T}\boldsymbol{1}$ and $g_{\lambda}^{\left(D\right)}$ is the $D$th derivative of $g_{\lambda}$.
\end{remark}

\section{Proofs}
\label{sec_Proofs}
\subsection{The general model}
\begin{definition}
	Two sequences $(a_{n})_{n=1}^{\infty}$ and $(b_{n})_{n=1}^{\infty}$ are asymptotically equal ($a_{n}\sim{}b_{n}$) if they are positive except finitely many terms, and $a_{n}/b_{n}\to{}1$ as $n\to\infty$.
\end{definition}
\begin{definition}
	A sequence $(\beta_{n})_{n=1}^{\infty}$ is regularly varying with exponent $\kappa$ if $\beta_{n}\sim\gamma_{n}n^{\kappa}$,  where $(\gamma_{n})_{n=1}^{\infty}$ is a slowly varying sequence. A sequence $(\gamma_{n})_{n=1}^{\infty}$ is slowly varying if for every positive $s$ we have $\gamma_{[sn]}/\gamma_{n}\to{}1$ as $n\to\infty$.
\end{definition}

We will use the following theorem, see also \cite{Dereich_Ortgiese} for a similar statement.
\begin{lemma}[Lemma 1 in \cite{Backhausz_Mori}]
	\label{lemma_Agi_Tamas}
	Let $\mathcal{F}=(\mathcal{F}_{n})_{n=1}^{\infty}$ be a filtration, $(\xi_{n})_{n=1}^{\infty}$ a nonnegative adapted process with respect to $\mathcal{F}$. Let $(w_{n})_{n=1}^{\infty}$ be a regularly varying sequence of positive numbers with exponent $\kappa>-1$. Suppose that for every $n\geq{}1$,
	\begin{align}
		\label{eq_Agi_Tamas}
		\mathbb{E}\left((\xi_{n}-\xi_{n-1})^{2}\Big|\mathcal{F}_{n-1}\right)&=O\left(n^{1-\delta+2\kappa}\right)
	\end{align}
	holds with some $\delta>0$. Let $(u_{n})_{n=1}^{\infty}$, $(v_{n})_{n=1}^{\infty}$ be nonnegative predictable processes with respect to $\mathcal{F}$ such that $u_{n}<n$ for all $n\geq{}1$.
	
	\begin{enumerate}[(a)]
		\item{}Suppose that
		\begin{align*}
			\mathbb{E}\left(\xi_{n}\big|\mathcal{F}_{n-1}\right)\leq\left(1-\frac{u_{n}}{n}\right)\xi_{n-1}+v_{n},
		\end{align*}
		and $\lim_{n\to\infty}u_{n}=u$, $\limsup_{n\to\infty}v_{n}/w_{n}\leq{}v$ with some random variables $u>0$, $v\geq{}0$. Then we have
		\begin{align*}
			\limsup_{n\to\infty}\frac{\xi_{n}}{nw_{n}}&\leq\frac{v}{u+\kappa+1}\textrm{ a.s.}
		\end{align*}
		\item{}Suppose that
		\begin{align*}
			\mathbb{E}\left(\xi_{n}\big|\mathcal{F}_{n-1}\right)\geq\left(1-\frac{u_{n}}{n}\right)\xi_{n-1}+v_{n},
		\end{align*}
		and $\lim_{n\to\infty}u_{n}=u$, $\liminf_{n\to\infty}v_{n}/w_{n}\geq{}v$ with some random variables $u>0$, $v\geq{}0$. Then we have
		\begin{align*}
			\liminf_{n\to\infty}\frac{\xi_{n}}{nw_{n}}&\geq\frac{v}{u+\kappa+1}\textrm{ a.s.}
		\end{align*}
	\end{enumerate}
\end{lemma}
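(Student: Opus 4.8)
The plan is to normalize $\xi_n$ by a predictable sequence that absorbs the drift factor $1-u_n/n$, turning the recursion into a telescoping inequality plus a martingale. Define $d_0=1$ and $d_n=\prod_{j=1}^n\left(1-\frac{u_j}{j}\right)$ for $n\ge1$. Since $0\le u_j<j$, each factor is positive, and because $u_j$ is predictable $d_n$ is $\mathcal F_{n-1}$-measurable. Writing $\eta_n=\xi_n-\mathbb E(\xi_n\mid\mathcal F_{n-1})$ for the innovations (a martingale difference sequence with $\mathbb E(\eta_n^2\mid\mathcal F_{n-1})\le\mathbb E((\xi_n-\xi_{n-1})^2\mid\mathcal F_{n-1})=O(n^{1-\delta+2\kappa})$), the hypothesis of (a) gives $\xi_n\le(1-u_n/n)\xi_{n-1}+v_n+\eta_n$. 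Dividing by $d_n=(1-u_n/n)d_{n-1}$ and setting $Y_n=\xi_n/d_n$ yields $Y_n\le Y_{n-1}+v_n/d_n+\eta_n/d_n$, and telescoping gives
$$Y_n\le Y_0+\sum_{j=1}^n\frac{v_j}{d_j}+M_n,\qquad M_n:=\sum_{j=1}^n\frac{\eta_j}{d_j},$$
where $M_n$ is a martingale because $d_j$ is predictable. Multiplying back by $d_n/(nw_n)>0$, it remains to analyse the three resulting terms under this scaling.

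Second, I would determine the pathwise asymptotics of $d_n$. On the almost sure event $\{u_n\to u\}$ one has $\log d_n=-\sum_{j=1}^n u_j/j+O(1)=-u\log n+o(\log n)$, and a short computation shows $d_n=n^{-u}L_n$ with $L_n$ slowly varying; thus $d_n$ is regularly varying with (random) exponent $-u$ and $w_j/d_j$ is regularly varying with exponent $\kappa+u>-1$. Using $\limsup_n v_n/w_n\le v$ together with Karamata's summation theorem applied pathwise, $\sum_{j=1}^n v_j/d_j\le(v+o(1))\sum_{j=1}^n w_j/d_j\sim\frac{v+o(1)}{u+\kappa+1}\,\frac{nw_n}{d_n}$, so that $\frac{d_n}{nw_n}\sum_{j=1}^n v_j/d_j\to\frac{v}{u+\kappa+1}$ and $\frac{d_n}{nw_n}Y_0\to0$.

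Third, I would show the martingale term is negligible. The key simplification is that $d_jB_j=jw_j$ is \emph{deterministic}, where $B_j=jw_j/d_j$; hence $\sum_{j\ge1}\eta_j/(jw_j)$ is a martingale whose increments satisfy $\mathbb E(\eta_j^2)/(jw_j)^2=O(j^{-1-\delta}/\gamma_j^2)$, which is summable since $\gamma_j$ is slowly varying. By the $L^2$ martingale convergence theorem this series converges almost surely, and since $B_j$ is regularly varying of positive index $1+\kappa+u$ (hence asymptotic to an increasing sequence), Kronecker's lemma gives $M_n/B_n=\frac{d_n}{nw_n}M_n\to0$ almost surely. Combining the three limits yields $\limsup_n\frac{\xi_n}{nw_n}\le\frac{v}{u+\kappa+1}$, proving (a); part (b) is identical with every inequality and $\limsup$ reversed to $\liminf$, the martingale estimate being two-sided.

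The main obstacle is the interplay between the randomness of the limit $u$ and the regular-variation machinery: the normalizing sequence $d_n$ is random, its exponent $-u$ is itself a random variable, and the slowly varying correction $L_n$ is path-dependent, so Karamata's theorem and Kronecker's lemma must be invoked pathwise on the almost sure event where $u_n\to u$ and $v_n/w_n$ has the prescribed $\limsup$/$\liminf$. Keeping the $L^2$ estimate for $M_n$ clean — which relies on the fortunate identity $d_jB_j=jw_j$ that removes all randomness from the relevant denominators — is what makes the argument go through despite this.
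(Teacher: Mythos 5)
First, note that the paper does not prove this lemma at all: it is imported verbatim as Lemma~1 of \cite{Backhausz_Mori}, so there is no in-paper proof to compare against. Your reconstruction follows essentially the strategy of the cited source: normalize by the predictable product $d_n=\prod_{j\le n}(1-u_j/j)$, split off the martingale of innovations, handle the drift term by pathwise Karamata summation (the exponent $\kappa+u>-1$ being random but fixed on each path), and kill the martingale term by an $L^2$ bound plus a Kronecker-type argument; your observation that $d_jB_j=jw_j$ is deterministic, so that $\sum_j \eta_j/(jw_j)$ has deterministic variance bounds $O(j^{-1-\delta}/\gamma_j^2)=O(j^{-1-\delta/2})$, is exactly the right device for coping with the random normalizer.

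One step, however, is not justified as written: ``$B_j$ is regularly varying of positive index, hence asymptotic to an increasing sequence, so Kronecker's lemma gives $M_n/B_n\to0$.'' Kronecker's lemma does not transfer along mere asymptotic equivalence: if $b_n\uparrow\infty$ and $B_n\sim b_n$, convergence of $\sum_j x_j/B_j$ does not imply convergence of $\sum_j x_j/b_j$, and the summation-by-parts proof of Kronecker applied directly to $B_n$ requires the total variation $\sum_{j<n}|B_{j+1}-B_j|$ to be $O(B_n)$, which can fail for regularly varying sequences of positive index (take the slowly varying factor $\gamma_n=1+(-1)^n/\log n$: then $\sum_{j\le n}j^{\rho}|\gamma_{j+1}-\gamma_j|\asymp n^{1+\rho}/\log n\gg n^{\rho}\gamma_n$). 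The repair is standard and cheap: by the Bojanic--Seneta (Karamata) representation of regularly varying sequences you may replace $w_n$ by an equivalent $\tilde w_n=n^{\kappa}\exp\bigl(\sum_{j\le n}\varepsilon_j/j\bigr)$ with $\varepsilon_j\to0$, which changes neither the hypotheses ($\limsup v_n/w_n$) nor the conclusion ($\xi_n/(nw_n)$), and makes $n\tilde w_n$ eventually nondecreasing since $(n+1)\tilde w_{n+1}/(n\tilde w_n)=1+(1+\kappa+\varepsilon_{n+1}+o(1))/n>1$ eventually; as $u_j\ge0$ makes $1/d_j$ nondecreasing, $B_j=j\tilde w_j/d_j$ is then eventually nondecreasing pathwise, and since $\sum_j(\eta_j/d_j)/B_j=\sum_j\eta_j/(j\tilde w_j)$ converges a.s., Kronecker applies legitimately from a (random) index on. With this patch, and with the observation that the $O$-constant in \eqref{eq_Agi_Tamas} must be deterministic for your unconditional variance computation (as it is in the paper's Assumption~2), your argument is complete and matches the original proof's route.
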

We will use this lemma for the sequence $w_{n}\equiv{}1$ and $\kappa=0$.

\subsection*{Proof of Theorem \ref{thm_ADD_GM}} We prove the theorem by induction on $\boldsymbol{d}^{T}\boldsymbol{1}$. If $\boldsymbol{d}^{T}\boldsymbol{1}$ is negative, then the proof is trivial. Let $\boldsymbol{d}\in(\mathbb{Z}_{0}^{+})^{N}$ be a fixed vector, such that $\boldsymbol{d}^{T}\boldsymbol{1}\geq{}0$. Notice that, for every $n\geq{}1$, in the $n$th step, the value of $X_{n}(\boldsymbol{d})$ may change due to the following events:
\begin{itemize}
	\item{}an existing vertex with generalized degree $\boldsymbol{d}$ is connected to the new vertex;
	\item{}an existing vertex with generalized degree $\boldsymbol{d}-\boldsymbol{i}=\left(d_{k}-i_{k}\right)_{k=1}^{N}$ is chosen, and it gets $i_{k}$ new edges of type $k$;
	\item{}the new vertex attaches to the old vertices with $d_{k}$ edges of type $k$ for every $k\in[N]$.
\end{itemize}
For every $n\geq{}1$, in the $n$th step, we have
\begin{align}
	\label{eq_GM}
	&\mathbb{E}\left[X_{n}(\boldsymbol{d})\big|\mathcal{F}_{n-1}\right]=X_{n-1}(\boldsymbol{d})p_{\boldsymbol{d}}^{(n)}(\boldsymbol{0})+\left[\sum_{\boldsymbol{i}\in{}H(\boldsymbol{d})}X_{n-1}(\boldsymbol{d}-\boldsymbol{i})p_{\boldsymbol{d}-\boldsymbol{i}}^{(n)}(\boldsymbol{i})\right]+q^{(n)}(\boldsymbol{d}),
\end{align}
where 
\begin{align*}
	H(\boldsymbol{d})=\left\{\boldsymbol{i}=(i_{1},\dots,i_{N})\in(\mathbb{Z}_{0}^{+})^{N}:\sum_{k=1}^{N}i_{k}\geq{}1\right\}.
\end{align*}
Assumption 2 implies that there exists a positive $\delta$ and a positive $C$ such that for every $n\geq{}1$ we have
\begin{align*}
	\mathbb{E}\left(\big|X_{n}(\boldsymbol{d})-X_{n-1}(\boldsymbol{d})\big|^{2}\Big|\mathcal{F}_{n-1}\right)\leq{}Cn^{1-\delta}.
\end{align*}
With this $\delta$, equation \eqref{eq_Agi_Tamas} in Lemma \ref{lemma_Agi_Tamas} is satisfied with $\xi_{n}=X_{n}(\boldsymbol{d})$. We want to rewrite equation \eqref{eq_GM} in the following form:
\begin{align*}
	\mathbb{E}\left[X_{n}(\boldsymbol{d})\big|\mathcal{F}_{n-1}\right]&=X_{n-1}(\boldsymbol{d})\left[1-\frac{u_{n}(\boldsymbol{d})}{n}\right]+v_{n}(\boldsymbol{d}),
\end{align*}
where the processes $\left(u_{n}(\boldsymbol{d})\right)_{n=1}^{\infty}$ and $\left(v_{n}(\boldsymbol{d})\right)_{n=1}^{\infty}$ satisfy the assumptions of Lemma \ref{lemma_Agi_Tamas}. Recall the definition of $u_{n}(\boldsymbol{d})$ from Assumption 3. It is easy to see that this process is predictable with respect to $\mathcal{F}$. Assumption 3 implies that there exists a positive random variable $u(\boldsymbol{d})$ such that $u_{n}(\boldsymbol{d})\to{}u(\boldsymbol{d})$ almost surely as $n\to\infty$. We define $H'(\boldsymbol{d})=H(\boldsymbol{d})\setminus\left\{\boldsymbol{e}_{k},k\in[N]\right\}$.

We define
\begin{align*}
	v_{n}(\boldsymbol{d})&=\sum_{k=1}^{N}X_{n-1}(\boldsymbol{d}-\boldsymbol{e}_{k})p_{\boldsymbol{d}-\boldsymbol{e}_{k}}^{(n)}(\boldsymbol{e}_{k})+\left[\sum_{\boldsymbol{i}\in{}H'(\boldsymbol{d})}X_{n-1}(\boldsymbol{d}-\boldsymbol{i})p_{\boldsymbol{d}-\boldsymbol{i}}^{(n)}(\boldsymbol{i})\right]+q^{(n)}(\boldsymbol{d}).
\end{align*}
It is easy to see that this process is predictable with respect to $\mathcal{F}$. Using Assumptions 4 and 5 and the induction hypothesis, we conclude that there exists a nonnegative random variable $v(\boldsymbol{d})$, such that
\begin{align*}
	v_{n}(\boldsymbol{d})\to{}v(\boldsymbol{d})=\sum_{k=1}^{N}r^{(k)}(\boldsymbol{d}-\boldsymbol{e}_{k})x(\boldsymbol{d}-\boldsymbol{e}_{k})+q(\boldsymbol{d})\textrm{ a.s.}
\end{align*}
as $n\to\infty$. Lemma \ref{lemma_Agi_Tamas} implies that
\begin{align*}
	\lim_{n\to\infty}\frac{X_{n}(\boldsymbol{d})}{n}=\frac{v(\boldsymbol{d})}{u(\boldsymbol{d})+1}\textrm{ a.s.}
\end{align*}
Since $|V_{n}|\sim{}n$, the proof of Theorem \ref{thm_ADD_GM} is complete. \hfill$\Box$

\subsection{Generalized Barab\'asi--Albert random graph}
First, for every $n\geq{}0$ we define $\mathcal{F}_{n}^{+}=\sigma(\mathcal{F}_{n},M_{n+1})$. We show that $|E_{n}|\sim{}mn$, where $m=\mathbb{E}(M)$. For every $n\geq{}1$ we have $|E_{n}|=\sum_{k=1}^{N}\big|E_{0}^{(k)}\big|+\sum_{i=1}^{n}M_{i}$. By the assumptions of the model, the sequence $(M_{n})_{n=1}^{\infty}$ satisfies the following conditions:
\begin{align*}
	\lim_{n\to\infty}\frac{1}{n}\sum_{i=1}^{n}\mathbb{E}(M_{i})=\mathbb{E}(M)=m>0\quad\textrm{and}\quad\sum_{n=1}^{\infty}\frac{\textrm{Var}(M_{n})}{n^{2}}<\infty.
\end{align*}
Therefore Kolmogorov's theorem can be applied (Theorem 6.7.\ in \cite{Petrov}) for the sequence $(M_{n})_{n=1}^{\infty}$, thus we have $|E_{n}|\sim{}mn$.

We will use the following lemma, which can be proved by Bonferroni's inequality.
\begin{lemma}
	\label{lemma_Approximation}
	For every $n\geq{}1$ and $x\in[0,1]$ we have
	\begin{align*}
		\left|(1-x)^{n}-(1-nx)\right|&\leq{n \choose 2}x^{2}.
	\end{align*}
\end{lemma}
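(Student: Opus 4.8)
The plan is to give a probabilistic proof exploiting Bonferroni's inequalities, as hinted. Fix $n \geq 1$ and $x \in [0,1]$, and introduce $n$ mutually independent events $A_1, \dots, A_n$, each of probability $x$, on some probability space. Then $(1-x)^n = \mathbb{P}\big(\bigcap_{i=1}^n A_i^c\big)$ is the probability that none of the events occurs, so that $1 - (1-x)^n = \mathbb{P}\big(\bigcup_{i=1}^n A_i\big)$. The whole point is to recognize that the quantity $(1-x)^n - (1-nx)$ is precisely the inclusion--exclusion remainder after two terms, whose sign and magnitude are controlled by Bonferroni.

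Next I would apply the first two Bonferroni inequalities to this union. The upper Bonferroni bound (truncating inclusion--exclusion after the first sum) gives $\mathbb{P}\big(\bigcup_i A_i\big) \leq \sum_{i=1}^n \mathbb{P}(A_i) = nx$, while the lower bound (truncating after the second sum) gives $\mathbb{P}\big(\bigcup_i A_i\big) \geq \sum_{i=1}^n \mathbb{P}(A_i) - \sum_{i<j} \mathbb{P}(A_i \cap A_j) = nx - \binom{n}{2}x^2$, where the last equality uses independence so that $\mathbb{P}(A_i \cap A_j) = x^2$ and there are $\binom{n}{2}$ such pairs.

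Rearranging these two inequalities yields a two-sided estimate. From the upper bound I get $(1-x)^n \geq 1 - nx$, i.e.\ $(1-x)^n - (1 - nx) \geq 0$; from the lower bound I get $(1-x)^n \leq 1 - nx + \binom{n}{2}x^2$, i.e.\ $(1-x)^n - (1-nx) \leq \binom{n}{2}x^2$. Combining these gives $0 \leq (1-x)^n - (1-nx) \leq \binom{n}{2}x^2$, which immediately yields the claimed bound on the absolute value.

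I expect no genuine obstacle here. The only points requiring a word of care are the validity of the Bonferroni inequalities, which hold for arbitrary events and need nothing beyond $x \in [0,1]$ to realize the $A_i$, and the boundary cases $x=0$ and $x=1$, which are covered automatically by the argument (and can also be checked by hand). An alternative, purely algebraic route would be to expand $(1-x)^n$ by the binomial theorem and bound the tail $\sum_{j\geq 2}\binom{n}{j}(-x)^j$ directly, but the Bonferroni argument is cleaner because it produces both the nonnegativity and the upper bound simultaneously, without having to control the signs of the alternating remainder term by hand.
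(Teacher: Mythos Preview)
Your proof is correct and follows exactly the route the paper indicates: the paper merely states that the lemma ``can be proved by Bonferroni's inequality'' without spelling out the details, and your argument with independent events $A_1,\dots,A_n$ of probability $x$ is precisely the intended proof.
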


\subsection*{Proof of Lemma \ref{lemma_AER_GBA}}
First, let us fix $k\in[N]$. For every $n\geq{}1$ the distribution of the number of new edges of type $k$ in the $n$th step conditionally with respect to $\mathcal{F}_{n-1}^{+}$ is $\textrm{Bin}\left(M_{n},\frac{\big|E_{n-1}^{(k)}\big|}{|E_{n-1}|}\right)$. For every $n\geq{}1$ we have
\begin{align*}
	\mathbb{E}\left(\frac{\big|E_{n}^{(k)}\big|}{|E_{n}|}\Bigg|\mathcal{F}_{n-1}^{+}\right)&=\frac{\big|E_{n-1}^{(k)}\big|}{|E_{n-1}|+M_{n}}+\frac{M_{n}\frac{\big|E_{n-1}^{(k)}\big|}{|E_{n-1}|}}{|E_{n-1}|+M_{n}}=\frac{\big|E_{n-1}^{(k)}\big|\left(1+\frac{M_{n}}{|E_{n-1}|}\right)}{|E_{n-1}|+M_{n}}=\frac{\big|E_{n-1}^{(k)}\big|}{|E_{n-1}|}.
\end{align*}
This is $\mathcal{F}_{n-1}$-measurable, hence this yields
\begin{align*}
	\mathbb{E}\left(\frac{\big|E_{n}^{(k)}\big|}{|E_{n}|}\Bigg|\mathcal{F}_{n-1}\right)&=\frac{\big|E_{n-1}^{(k)}\big|}{|E_{n-1}|}.
\end{align*}
We conclude that $\left(\zeta_{n}^{(k)},\mathcal{F}_{n}\right)_{n=1}^{\infty}$ is a nonnegative martingale, thus it is convergent almost surely. Let $\zeta^{(k)}\geq{}0$ be its limit. The proof of Lemma \ref{lemma_AER_GBA} is complete. \hfill$\Box$

\subsection*{Proof of Theorem \ref{thm_ADD_gen_BA}} We use Theorem \ref{thm_ADD_GM}, so we have to check the assumptions of the general model.

\textbf{Assumption 1.}\ By the dynamics of the model, it is easy to see that Assumption 1 trivially holds.

\textbf{Assumption 2.}\ Assumption (BA2) implies that, for every $n\geq{}1$ and $\boldsymbol{d}\in(\mathbb{Z}_{0}^{+})^{N}$ we have
\begin{align*}
\mathbb{E}\left(\big|X_{n}(\boldsymbol{d})-X_{n-1}(\boldsymbol{d})\big|^{2}\Big|\mathcal{F}_{n-1}\right)&\leq\mathbb{E}(M_{n}^{2})\to\mathbb{E}(M^{2})<\infty
\end{align*}
as $n\to\infty$. If we choose $\delta=1$, then Assumption 2 is satisfied.

\textbf{Assumption 3.}\ For every $n\geq{}1$ and $\boldsymbol{d}\in(\mathbb{Z}_{0}^{+})^{N}$ we have
\begin{align*}
	p_{\boldsymbol{d}}^{(n)}(\boldsymbol{0})&=\mathbb{E}\left[\left(1-\frac{\boldsymbol{d}^{T}\boldsymbol{1}}{2|E_{n-1}|}\right)^{M_{n}}\Bigg|\mathcal{F}_{n-1}\right].
\end{align*}
To calculate the expected value above, we will use the following formula:
\begin{align*}
	&\mathbb{E}\left[\left(1-\frac{\boldsymbol{d}^{T}\boldsymbol{1}}{2|E_{n-1}|}\right)^{M_{n}}\Bigg|\mathcal{F}_{n-1}\right]=\mathbb{E}\left(1-M_{n}\frac{\boldsymbol{d}^{T}\boldsymbol{1}}{2|E_{n-1}|}\Bigg|\mathcal{F}_{n-1}\right)+\eta_{n}(\boldsymbol{d}),
\end{align*}
where
\begin{align*}
	\eta_{n}(\boldsymbol{d})&=\mathbb{E}\left[\left(1-\frac{\boldsymbol{d}^{T}\boldsymbol{1}}{2|E_{n-1}|}\right)^{M_{n}}-\left(1-M_{n}\frac{\boldsymbol{d}^{T}\boldsymbol{1}}{2|E_{n-1}|}\right)\Bigg|\mathcal{F}_{n-1}\right].
\end{align*}
Lemma \ref{lemma_Approximation} implies that for every $n\geq\boldsymbol{d}^{T}\boldsymbol{1}$ we have
\begin{align*}
	&\left|\left(1-\frac{\boldsymbol{d}^{T}\boldsymbol{1}}{2|E_{n-1}|}\right)^{M_{n}}-\left(1-M_{n}\frac{\boldsymbol{d}^{T}\boldsymbol{1}}{2|E_{n-1}|}\right)\right|\leq{M_{n} \choose 2}\left(\frac{\boldsymbol{d}^{T}\boldsymbol{1}}{2|E_{n-1}|}\right)^{2}.
\end{align*}
By using the above bound, we obtain that
\begin{align*}
	|\eta_{n}(\boldsymbol{d})|&\leq\mathbb{E}\left[\left|\left(1-\frac{\boldsymbol{d}^{T}\boldsymbol{1}}{2|E_{n-1}|}\right)^{M_{n}}-\left(1-M_{n}\frac{\boldsymbol{d}^{T}\boldsymbol{1}}{2|E_{n-1}|}\right)\right|\Bigg|\mathcal{F}_{n-1}\right]\\
	&\leq{}\mathbb{E}\left[{M_{n} \choose 2}\left(\frac{\boldsymbol{d}^{T}\boldsymbol{1}}{2|E_{n-1}|}\right)^{2}\Bigg|\mathcal{F}_{n-1}\right]=\left(\frac{\boldsymbol{d}^{T}\boldsymbol{1}}{2|E_{n-1}|}\right)^{2}\mathbb{E}\left[{M_{n} \choose 2}\right]\\
	&\leq{}\left(\frac{\boldsymbol{d}^{T}\boldsymbol{1}}{2|E_{n-1}|}\right)^{2}\mathbb{E}(M_{n}^{2})
\end{align*}
almost surely, by $|E_{n-1}|\sim{}mn$, and Assumption (BA2). The definition of $u_{n}(\boldsymbol{d})$ and $\eta_{n}(\boldsymbol{d})$ implies that
\begin{align*}
	u_{n}(\boldsymbol{d})&=n\left(1-\left[\mathbb{E}\left(1-M_{n}\frac{\boldsymbol{d}^{T}\boldsymbol{1}}{2|E_{n-1}|}\Bigg|\mathcal{F}_{n-1}\right)+\eta_{n}(\boldsymbol{d})\right]\right)\\
	&=n\frac{\boldsymbol{d}^{T}\boldsymbol{1}}{2}\cdot\frac{\mathbb{E}(M_{n})}{|E_{n-1}|}-n\cdot\eta_{n}(\boldsymbol{d}).
\end{align*}
This is $\mathcal{F}_{n-1}$-measurable, hence $(u_{n}(\boldsymbol{d}))_{n=1}^{\infty}$ is a predictable process with respect to $\mathcal{F}$. Recall that $|E_{n-1}|\sim{}mn$ and $n\cdot|\eta_{n}(\boldsymbol{d})|=o(1)$ almost surely. Assumption (BA2) implies that
\begin{align*}
	u(\boldsymbol{d})&=\lim_{n\to\infty}u_{n}(\boldsymbol{d})=\frac{\boldsymbol{d}^{T}\boldsymbol{1}}{2}\textrm{ a.s.}
\end{align*}

\textbf{Assumption 4.}\ First, we fix $k\in[N]$. For every $n\geq{}1$ and $\boldsymbol{d}\in(\mathbb{Z}_{0}^{+})^{N}$, where $\boldsymbol{d}^{T}\boldsymbol{1}\geq{}1$, the following holds:
\begin{align}
\label{eq_Assumption4_GBA}
	p_{\boldsymbol{d}-\boldsymbol{e}_{k}}^{(n)}(\boldsymbol{e}_{k})&=\mathbb{E}\left[M_{n}\left(\frac{d_{k}-1}{2|E_{n-1}|}\right)\left(1-\frac{\boldsymbol{d}^{T}\boldsymbol{1}-1}{2|E_{n-1}|}\right)^{M_{n}-1}\Bigg|\mathcal{F}_{n-1}\right]\\
	&=\left(\frac{d_{k}-1}{2|E_{n-1}|}\right)\mathbb{E}\left[M_{n}\left(1-\frac{\boldsymbol{d}^{T}\boldsymbol{1}-1}{2|E_{n-1}|}\right)^{M_{n}-1}\Bigg|\mathcal{F}_{n-1}\right]\nonumber.
\end{align}
Similarly to the previous case, we obtain that
\begin{align*}
	&\mathbb{E}\left[M_{n}\left(1-\frac{\boldsymbol{d}^{T}\boldsymbol{1}-1}{2|E_{n-1}|}\right)^{M_{n}-1}\Bigg|\mathcal{F}_{n-1}\right]=\mathbb{E}\left[M_{n}\left(1-(M_{n}-1)\frac{\boldsymbol{d}^{T}\boldsymbol{1}-1}{2|E_{n-1}|}\right)\Bigg|\mathcal{F}_{n-1}\right]+\eta_{n}(\boldsymbol{d}),
\end{align*}
where
\begin{align*}
	\eta_{n}(\boldsymbol{d})&=\mathbb{E}\left[M_{n}\left(1-\frac{\boldsymbol{d}^{T}\boldsymbol{1}-1}{2|E_{n-1}|}\right)^{M_{n}-1}-M_{n}\left(1-(M_{n}-1)\frac{\boldsymbol{d}^{T}\boldsymbol{1}-1}{2|E_{n-1}|}\right)\Bigg|\mathcal{F}_{n-1}\right],
\end{align*}
which is not the same sequence as the $\eta$'s from the previous section. Lemma \ref{lemma_Approximation} implies that for every $n\geq\boldsymbol{d}^{T}\boldsymbol{1}$ we have
\begin{align*}
	&\left|M_{n}\left(1-\frac{\boldsymbol{d}^{T}\boldsymbol{1}-1}{2|E_{n-1}|}\right)^{M_{n}-1}-M_{n}\left(1-(M_{n}-1)\frac{\boldsymbol{d}^{T}\boldsymbol{1}-1}{2|E_{n-1}|}\right)\right|\leq{}M_{n}{M_{n}-1 \choose 2}\left(\frac{\boldsymbol{d}^{T}\boldsymbol{1}-1}{2|E_{n-1}|}\right)^{2}.
\end{align*}
Combining this with Assumption (BA2), we obtain that
\begin{align*}
	|\eta_{n}(\boldsymbol{d})|&\leq\mathbb{E}\left[\left|M_{n}\left(1-\frac{\boldsymbol{d}^{T}\boldsymbol{1}-1}{2|E_{n-1}|}\right)^{M_{n}-1}-M_{n}\left(1-(M_{n}-1)\frac{\boldsymbol{d}^{T}\boldsymbol{1}-1}{2|E_{n-1}|}\right)\right|\Bigg|\mathcal{F}_{n-1}\right]\\
	&\leq{}\mathbb{E}\left[M_{n}{M_{n}-1 \choose 2}\left(\frac{\boldsymbol{d}^{T}\boldsymbol{1}-1}{2|E_{n-1}|}\right)^{2}\Bigg|\mathcal{F}_{n-1}\right]=\left(\frac{\boldsymbol{d}^{T}\boldsymbol{1}-1}{2|E_{n-1}|}\right)^{2}\mathbb{E}\left[M_{n}{M_{n}-1 \choose 2}\right]\\
	&\leq{}\left(\frac{\boldsymbol{d}^{T}\boldsymbol{1}-1}{2|E_{n-1}|}\right)^{2}\mathbb{E}(M_{n}^{3})=o\left(\frac{1}{n}\right)\textrm{ a.s.}
\end{align*}
Getting back to equation \eqref{eq_Assumption4_GBA}, we conclude that
\begin{align*}
	p_{\boldsymbol{d}-\boldsymbol{e}_{k}}^{(n)}(\boldsymbol{e}_{k})&=\left(\frac{d_{k}-1}{2|E_{n-1}|}\right)\mathbb{E}\left[M_{n}\left(1-\frac{\boldsymbol{d}^{T}\boldsymbol{1}-1}{2|E_{n-1}|}\right)^{M_{n}-1}\Bigg|\mathcal{F}_{n-1}\right]\\
	&=\left(\frac{d_{k}-1}{2|E_{n-1}|}\right)\left(\mathbb{E}\left[M_{n}\left(1-(M_{n}-1)\frac{\boldsymbol{d}^{T}\boldsymbol{1}-1}{2|E_{n-1}|}\right)\Bigg|\mathcal{F}_{n-1}\right]+\eta_{n}(\boldsymbol{d})\right)\\
	&=\left(\frac{d_{k}-1}{2|E_{n-1}|}\right)\left(\mathbb{E}(M_{n})-\mathbb{E}\left[M_{n}(M_{n}-1)\frac{\boldsymbol{d}^{T}\boldsymbol{1}-1}{2|E_{n-1}|}\Bigg|\mathcal{F}_{n-1}\right]+\eta_{n}(\boldsymbol{d})\right)\\
	&\sim\frac{d_{k}-1}{2}\cdot\frac{1}{n}+o\left(\frac{1}{n}\right)\textrm{ a.s.}
\end{align*}
Therefore, for every $k\in[N]$ we have
\begin{align*}
	\lim_{n\to\infty}np_{\boldsymbol{d}-\boldsymbol{e}_{k}}^{(n)}(\boldsymbol{e}_{k})&=r^{(k)}(\boldsymbol{d}-\boldsymbol{e}_{k})=\frac{d_{k}-1}{2}\textrm{ a.s.}
\end{align*}

Let $\boldsymbol{i}\in{}H'(\boldsymbol{d})$, i.e.\ $\forall{}k\in[N]:0\leq{}i_{k}\leq{}d_{k}$ and $\boldsymbol{i}^{T}\boldsymbol{1}\geq{}2$. In this case, we can bound the conditional expectation as follows:
\begin{align*}
	&p_{\boldsymbol{d}-\boldsymbol{i}}^{(n)}(\boldsymbol{i})\\
	&=\mathbb{E}\left[\frac{M_{n}!}{\left(\prod_{k=1}^{N}i_{k}!\right)\left(M_{n}-\boldsymbol{i}^{T}\boldsymbol{1}\right)!}\left[\prod_{k=1}^{N}\left(\frac{d_{k}-i_{k}}{2|E_{n-1}|}\right)^{i_{k}}\right]\left(1-\frac{(\boldsymbol{d-i})^{T}\boldsymbol{1}}{2|E_{n-1}|}\right)^{M_{n}-\boldsymbol{i}^{T}\boldsymbol{1}}\Bigg|\mathcal{F}_{n-1}\right]\\
	&\leq{}\prod_{k=1}^{N}\left(\frac{d_{k}-i_{k}}{2|E_{n-1}|}\right)^{i_{k}}\mathbb{E}\left[\frac{M_{n}!}{\left(\prod_{k=1}^{N}i_{k}!\right)\left(M_{n}-\boldsymbol{i}^{T}\boldsymbol{1}\right)!}\right]\leq\frac{\prod_{k=1}^{N}(d_{k}-i_{k})^{i_{k}}}{\left(2|E_{n-1}|\right)^{\boldsymbol{i}^{T}\boldsymbol{1}}}\mathbb{E}\left(M_{n}^{\boldsymbol{i}^{T}\boldsymbol{1}}\right).
\end{align*}
This yields
\begin{align*}
	\lim_{n\to\infty}np_{\boldsymbol{d}-\boldsymbol{i}}^{(n)}(\boldsymbol{i})=0\textrm{ a.s.},
\end{align*}
due to Assumption (BA2) and the fact that $|E_{n-1}|\sim{}mn$.

\textbf{Assumption 5.}\ By the dynamics of the model, we conclude that for every $n\geq{}1$ and $\boldsymbol{d}\in(\mathbb{Z}_{0}^{+})^{N}$ the following holds:
\begin{align*}
	q^{(n)}(\boldsymbol{d})&=\mathbb{E}\left[I\left(M_{n}=\boldsymbol{d}^{T}\boldsymbol{1}\right)\frac{\left(\boldsymbol{d}^{T}\boldsymbol{1}\right)!}{\prod_{k=1}^{N}d_{k}!}\prod_{k=1}^{N}\left(\frac{\big|E_{n-1}^{(k)}\big|}{|E_{n-1}|}\right)^{d_{k}}\Bigg|\mathcal{F}_{n-1}\right]\\
	&=\mathbb{P}\left(M_{n}=\boldsymbol{d}^{T}\boldsymbol{1}\right)\frac{\left(\boldsymbol{d}^{T}\boldsymbol{1}\right)!}{\prod_{k=1}^{N}d_{k}!}\prod_{k=1}^{N}\left(\frac{\big|E_{n-1}^{(k)}\big|}{|E_{n-1}|}\right)^{d_{k}}.
\end{align*}
Assumption (BA1) implies that $\mathbb{P}\left(M_{n}=\boldsymbol{d}^{T}\boldsymbol{1}\right)\to\mathbb{P}\left(M=\boldsymbol{d}^{T}\boldsymbol{1}\right)$ as $n\to\infty$. It follows from Lemma \ref{lemma_AER_GBA} that
\begin{align*}
	q(\boldsymbol{d})&=\lim_{n\to\infty}q^{(n)}(\boldsymbol{d})=\mathbb{P}\left(M=\boldsymbol{d}^{T}\boldsymbol{1}\right)\frac{\left(\boldsymbol{d}^{T}\boldsymbol{1}\right)!}{\prod_{k=1}^{N}d_{k}!}\prod_{k=1}^{N}\left(\zeta^{(k)}\right)^{d_{k}}\textrm{ a.s.}
\end{align*}
This yields
\begin{align*}
	u(\boldsymbol{d})&=\frac{\boldsymbol{d}^{T}\boldsymbol{1}}{2},\\
	r^{(k)}(\boldsymbol{d}-\boldsymbol{e}_{k})&=\frac{d_{k}-1}{2}\quad(\forall{}k\in[N])\\
	q(\boldsymbol{d})&=\mathbb{P}\left(M=\boldsymbol{d}^{T}\boldsymbol{1}\right)\frac{\left(\boldsymbol{d}^{T}\boldsymbol{1}\right)!}{\prod_{k=1}^{N}d_{k}!}\prod_{k=1}^{N}\left(\zeta^{(k)}\right)^{d_{k}}.
\end{align*}

Applying Theorem \ref{thm_ADD_GM} we get Theorem \ref{thm_ADD_gen_BA}. \hfill$\Box$

\subsection{Model of independent edges} We will use the following lemma.
\begin{lemma}
	For the number of edges, we have the following asymptotics: $|E_{n}|\sim\mu{}n$.
\end{lemma}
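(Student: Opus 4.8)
The plan is to mirror the argument used for $|E_{n}|\sim mn$ in the generalized Barab\'asi--Albert model, namely to apply Kolmogorov's strong law of large numbers (Theorem 6.7 in \cite{Petrov}) to the increments $\Delta_{n}$. First I would write $|E_{n}|=|E_{0}|+\sum_{i=1}^{n}\Delta_{i}$, where $|E_{0}|=\sum_{k=1}^{N}\big|E_{0}^{(k)}\big|$ is a constant, so that the claim $|E_{n}|\sim\mu n$ reduces to showing $\frac{1}{n}\sum_{i=1}^{n}\Delta_{i}\to\mu$ almost surely.

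To apply Kolmogorov's theorem I first need the increments $(\Delta_{n})_{n=1}^{\infty}$ to be independent. This follows from the conditional structure of the model: for any bounded measurable $f$, writing $g(t)=\mathbb{E}(f(Y))$ with $Y\sim\textrm{Poi}(t)$ and using that $\Delta_{n}|\mathcal{F}_{n-1}^{+}\sim\textrm{Poi}(\lambda_{n})$ with $\lambda_{n}$ being $\mathcal{F}_{n-1}^{+}$-measurable, the tower property gives
\begin{align*}
	\mathbb{E}\left(f(\Delta_{n})\big|\mathcal{F}_{n-1}\right)=\mathbb{E}\left(g(\lambda_{n})\big|\mathcal{F}_{n-1}\right)=\mathbb{E}\left(g(\lambda_{n})\right),
\end{align*}
where the last equality uses Assumption (IE1) that $\lambda_{n}$ is independent of $\mathcal{F}_{n-1}$. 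Hence $\Delta_{n}$ is independent of $\mathcal{F}_{n-1}$, and since $\Delta_{1},\dots,\Delta_{n-1}$ are $\mathcal{F}_{n-1}$-measurable, induction yields the mutual independence of the $\Delta_{n}$.

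Next I would verify the two hypotheses of Kolmogorov's theorem. For the means, $\mathbb{E}(\Delta_{n})=\mathbb{E}(\lambda_{n})\to\mu>0$ by Assumption (IE2) with $p=1$, so $\frac{1}{n}\sum_{i=1}^{n}\mathbb{E}(\Delta_{i})\to\mu$ by Ces\`aro averaging. For the variances, since conditionally on $\lambda_{n}$ the increment $\Delta_{n}$ is $\textrm{Poi}(\lambda_{n})$, the law of total variance gives
\begin{align*}
	\textrm{Var}(\Delta_{n})=\mathbb{E}\left(\textrm{Var}(\Delta_{n}|\lambda_{n})\right)+\textrm{Var}\left(\mathbb{E}(\Delta_{n}|\lambda_{n})\right)=\mathbb{E}(\lambda_{n})+\textrm{Var}(\lambda_{n}).
\end{align*}
Using Assumption (IE2) with $p=1,2$, this tends to $\mu+\sigma^{2}<\infty$, so the sequence $\textrm{Var}(\Delta_{n})$ is bounded and therefore $\sum_{n=1}^{\infty}\textrm{Var}(\Delta_{n})/n^{2}<\infty$.

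With both conditions in place, Kolmogorov's theorem yields $\frac{1}{n}\sum_{i=1}^{n}\left(\Delta_{i}-\mathbb{E}(\Delta_{i})\right)\to0$ almost surely, and combined with $\frac{1}{n}\sum_{i=1}^{n}\mathbb{E}(\Delta_{i})\to\mu$ this gives $\frac{1}{n}\sum_{i=1}^{n}\Delta_{i}\to\mu$ almost surely, whence $|E_{n}|\sim\mu n$. The only point requiring care is the independence of the increments; once that is settled, the mean and variance asymptotics are routine consequences of Assumption (IE2) and the law of total variance, so I expect no substantial obstacle beyond stating the independence argument cleanly.
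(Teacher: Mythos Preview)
Your proof is correct, but it takes a different route from the paper's. The paper does \emph{not} argue that the increments $\Delta_{n}$ are independent; instead it splits the sum as $|E_{n}|=\bigl(\sum_{i=1}^{n}\lambda_{i}\bigr)+Z_{n}$ with $Z_{n}=\sum_{i\le n}(\Delta_{i}-\lambda_{i})$, shows that $(Z_{n},\mathcal{F}_{n})$ is a square-integrable martingale with predictable quadratic variation $A_{n}\le n(\mu+\sigma^{2})$, and invokes a martingale SLLN (Neveu, Proposition VII-2-4) to get $Z_{n}=o(n)$. It then applies Kolmogorov's theorem to the independent sequence $(\lambda_{n})$ to conclude $\sum_{i=1}^{n}\lambda_{i}\sim\mu n$.

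Your argument is more elementary: you observe that the conditional law of $\Delta_{n}$ given $\mathcal{F}_{n-1}^{+}$ is $\textrm{Poi}(\lambda_{n})$, so $\mathbb{E}(f(\Delta_{n})\mid\mathcal{F}_{n-1})=\mathbb{E}(g(\lambda_{n}))$ is deterministic by (IE1), whence the $\Delta_{n}$ are mutually independent and Kolmogorov's theorem applies directly. This exploits a special feature of the model (the total step size has a conditional law depending only on $\lambda_{n}$, which is itself independent of the past) and bypasses the martingale machinery entirely. The paper's approach, by contrast, would still work in variants where $\mathbb{E}(\Delta_{n}\mid\mathcal{F}_{n-1}^{+})=\lambda_{n}$ but the conditional law of $\Delta_{n}$ genuinely depends on $\mathcal{F}_{n-1}$; and it delivers the slightly sharper $Z_{n}=o(n^{1/2+\varepsilon})$, though that is not needed for the lemma as stated.
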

\begin{proof}
	Let us have $\Delta_{0}=|E_{0}|$ and $\lambda_{0}=0$. We define 
	
	\begin{align*}
		Z_{n}=\sum_{i=0}^{n}\Delta_{i}-\lambda_{i}=|E_{n}|-\sum_{i=1}^{n}\lambda_{i}.
	\end{align*}
	
	We show that $(Z_{n},\mathcal{F}_{n})_{n=1}^{\infty}$ is a square integrable martingale, i.e.\ $(Z_{n},\mathcal{F}_{n})_{n=1}^{\infty}$ is a martingale, and we have $\mathbb{E}(Z_{n}^{2})<\infty$ for every $n\geq{}1$.
	
	For every $n\geq{}1$ we have
	\begin{align*}
		\mathbb{E}(Z_{n}|\mathcal{F}_{n-1})&=\mathbb{E}(Z_{n-1}+\Delta_{n}-\lambda_{n}|\mathcal{F}_{n-1})=\\
		&=Z_{n-1}+\mathbb{E}\left[\mathbb{E}(\Delta_{n}|\mathcal{F}_{n-1}^{+})-\lambda_{n}|\mathcal{F}_{n-1}\right]=Z_{n-1},
	\end{align*}
	since $\Delta_{n}|\mathcal{F}_{n-1}^{+}\sim\textrm{Poi}(\lambda_{n})$.
	
	Furthermore, we can bound the expectation of the squares as follows:
	\begin{align*}
		\mathbb{E}(Z_{n}^{2})&=\mathbb{E}\left[\left(\sum_{i=1}^{n}\Delta_{i}-\lambda_{i}\right)^{2}\right]=\mathbb{E}\left[\sum_{i=1}^{n}(\Delta_{i}-\lambda_{i})^{2}+2\sum_{i<j}(\Delta_{i}-\lambda_{i})(\Delta_{j}-\lambda_{j})\right]\\
		&=\sum_{i=1}^{n}\mathbb{E}\left[(\Delta_{i}-\lambda_{i})^{2}\right]+2\sum_{i<j}\mathbb{E}\left[(\Delta_{i}-\lambda_{i})(\Delta_{j}-\lambda_{j})\right]\\
		&=\sum_{i=1}^{n}\mathbb{E}\left(\mathbb{E}\left[(\Delta_{i}-\lambda_{i})^{2}\Big|\mathcal{F}_{i-1}^{+}\right]\right)+2\sum_{i<j}\mathbb{E}\left(\mathbb{E}\left[(\Delta_{i}-\lambda_{i})(\Delta_{j}-\lambda_{j})\Big|\mathcal{F}_{j-1}^{+}\right]\right)\\
		&=\sum_{i=1}^{n}\mathbb{E}(\lambda_{i})<\infty,
	\end{align*}
	hence $(Z_{n},\mathcal{F}_{n})_{n=1}^{\infty}$ is a square integrable martingale. The increasing process associated with $Z_{n}^{2}$ by the Doob decomposition is the following:
	\begin{align*}
		A_{n}&=\sum_{i=1}^{n}\textrm{Var}\left(\Delta_{i}|\mathcal{F}_{i-1}\right)=\sum_{i=1}^{n}\mathbb{E}(\Delta_{i}^{2}|\mathcal{F}_{i-1})-\mathbb{E}^{2}(\Delta_{i}|\mathcal{F}_{i-1})\\
		&=\sum_{i=1}^{n}\mathbb{E}\left[\mathbb{E}(\Delta_{i}^{2}|\mathcal{F}_{i-1}^{+})\Big|\mathcal{F}_{i-1}\right]-\mathbb{E}^{2}\left[\mathbb{E}(\Delta_{i}|\mathcal{F}_{i-1}^{+})\Big|\mathcal{F}_{i-1}\right]\\
		&=\sum_{i=1}^{n}\mathbb{E}(\lambda_{i}^{2}+\lambda_{i})-\mathbb{E}^{2}(\lambda_{i})\\
		&=\sum_{i=1}^{n}\textrm{Var}(\lambda_{i})+\mathbb{E}(\lambda_{i})\leq{}n(\mu+\sigma^{2}).
	\end{align*}
	By using \cite{Neveu}, Proposition VII-2-4, we conclude that $|E_{n}|=\left(\sum_{i=1}^{n}\lambda_{i}\right)n+o\left(n^{1/2+\varepsilon}\right)$ almost surely as $n\to\infty$ on the event $\{A_{n}\to\infty\}$ for all $\varepsilon>0$.
	
	For every $n\geq{}1$ we have $|E_{n}|=\sum_{k=1}^{N}\big|E_{0}^{(k)}\big|+\sum_{i=1}^{n}\Delta_{i}$. By the assumptions of the model, the sequence $(\lambda_{i})_{i=1}^{n}$ satisfies the following conditions:
	\begin{align*}
		\lim_{n\to\infty}\frac{1}{n}\sum_{i=1}^{n}\mathbb{E}(\lambda_{i})=\mathbb{E}(\lambda)=\mu\quad\textrm{and}\quad\sum_{n=1}^{\infty}\frac{\textrm{Var}(\lambda_{n})}{n^{2}}<\infty.
	\end{align*}
	Therefore, Kolmogorov's theorem can be applied (Theorem 6.7.\ in \cite{Petrov}) for the sequence $(\lambda_{n})_{n=1}^{\infty}$. We get that $|E_{n}|\sim\mu{}n$.
\end{proof}

\subsection*{Proof of Lemma \ref{lemma_AER_MIE}.}\ Recall that for a fix $k\in[N]$ we have
\begin{align*}
	\Delta_{n}^{(k)}\sim\textrm{Poi}\left(\lambda_{n}\frac{|E_{n}^{(k)}|}{|E_{n}|}\right)\qquad\textrm{and}\qquad\Delta_{n}-\Delta_{n}^{(k)}\sim\textrm{Poi}\left(\lambda_{n}\left[1-\frac{|E_{n}^{(k)}|}{|E_{n}|}\right]\right),
\end{align*}
furthermore $\Delta_{n}^{(k)}$ and $\Delta_{n}-\Delta_{n}^{(k)}$ are conditionally independent given $\mathcal{F}_{n-1}$. Because of this, it is enough to prove this lemma for $N=2$, which means there are only two types.

We are going to show that we have  $\Delta_{n}^{(1)}\Big|\mathcal{F}_{n-1}^{+}\sim\textrm{Bin}\left(\Delta_{n},\frac{|E_{n-1}^{(1)}|}{|E_{n-1}|}\right)$. For every $n\geq{}1$ we define $\mathcal{F}_{n-1}^{++}=\sigma(\mathcal{F}_{n-1}^{+},\Delta_{n})$. For all $i\leq{}j$ the conditional distribution can be calculated as follows:
\begin{align*}
	&\mathbb{P}\left(\Delta_{n}^{(1)}=i\Big|\Delta_{n}=j,\mathcal{F}_{n-1}^{+}\right)=\frac{\mathbb{P}\left(\Delta_{n}^{(1)}=i,\Delta_{n}=j\Big|\mathcal{F}_{n-1}^{+}\right)}{\mathbb{P}\left(\Delta_{n}^{(1)}+\Delta_{n}^{(2)}=j\Big|\mathcal{F}_{n-1}^{+}\right)}\\
	&=\frac{\mathbb{P}\left(\Delta_{n}^{(1)}=i,\Delta_{n}^{(2)}=j-i\Big|\mathcal{F}_{n-1}^{+}\right)}{\mathbb{P}\left(\Delta_{n}^{(1)}+\Delta_{n}^{(2)}=j\Big|\mathcal{F}_{n-1}^{+}\right)}=\frac{\mathbb{P}\left(\Delta_{n}^{(1)}=i\Big|\mathcal{F}_{n-1}^{+}\right)\cdot\mathbb{P}\left(\Delta_{n}^{(2)}=j-i\Big|\mathcal{F}_{n-1}^{+}\right)}{\mathbb{P}\left(\Delta_{n}^{(1)}+\Delta_{n}^{(2)}=j\Big|\mathcal{F}_{n-1}^{+}\right)}\\
	&=\frac{\frac{\left(\lambda_{n}\frac{|E_{n-1}^{(1)}|}{|E_{n-1}|}\right)^{i}}{i!}\cdot\exp\left(-\lambda_{n}\frac{|E_{n-1}^{(1)}|}{|E_{n-1}|}\right)\cdot\frac{\left(\lambda_{n}\frac{|E_{n-1}^{(2)}|}{|E_{n-1}|}\right)^{j-i}}{(j-i)!}\cdot\exp\left(-\lambda_{n}\frac{|E_{n-1}^{(2)}|}{|E_{n-1}|}\right)}{\frac{\lambda_{n}^{j}}{j!}\cdot\exp\left(-\lambda_{n}\right)}\\
	&={j \choose i}\left(\frac{|E_{n-1}^{(1)}|}{|E_{n-1}|}\right)^{i}\left(1-\frac{|E_{n-1}^{(1)}|}{|E_{n-1}|}\right)^{j-i}.
\end{align*}
For all $n\geq{}1$, similarly to the proof of Lemma \ref{lemma_AER_GBA}, we have
\begin{align*}
	\mathbb{E}\left(\frac{|E_{n}^{(1)}|}{|E_{n}|}\Bigg|\mathcal{F}_{n-1}^{++}\right)&=\frac{|E_{n-1}^{(1)}|}{|E_{n-1}|+\Delta_{n}}+\frac{\Delta_{n}\frac{|E_{n-1}^{(1)}|}{|E_{n-1}|}}{|E_{n-1}|+\Delta_{n}}=\frac{|E_{n-1}^{(1)}|}{|E_{n-1}|}.
\end{align*}
Notice that $E_{n-1}$ and $E_{n-1}^{(1)}$ are $\mathcal{F}_{n-1}$-measurable, which implies that
\begin{align*}
\mathbb{E}\left(\frac{|E_{n}^{(1)}|}{|E_{n}|}\Bigg|\mathcal{F}_{n-1}\right)&=\frac{|E_{n-1}^{(1)}|}{|E_{n-1}|}.
\end{align*}
We conclude that $\left(\hat{\zeta}_{n}^{(1)},\mathcal{F}_{n}\right)_{n=1}^{\infty}$ is a nonnegative martingale, thus it is convergent almost surely. Let $\hat{\zeta}^{(1)}\geq{}0$ be its limit. The proof of Lemma \ref{lemma_AER_MIE} is complete. \hfill$\Box$

\subsection*{Proof of Theorem \ref{thm_ADD_MIE}.} We will use Theorem \ref{thm_ADD_GM}, so we have to check the assumptions of the general model.

\textbf{Assumption 1.}\ Again, Assumption 1 trivially holds.

\textbf{Assumption 2.}\ By using $\Delta_{n}|\mathcal{F}_{n-1}^{+}\sim\textrm{Poi}(\lambda_{n})$ we obtain that for all $\boldsymbol{d}\in(\mathbb{Z}_{0}^{+})^{N}$ we have
\begin{align*}
	&\mathbb{E}\left[\big|X_{n}(\boldsymbol{d})-X_{n-1}(\boldsymbol{d})\big|^{2}\Big|\mathcal{F}_{n-1}\right]\leq\mathbb{E}(\Delta_{n}^{2}|\mathcal{F}_{n-1})=\mathbb{E}\left[\mathbb{E}(\Delta_{n}^{2}|\mathcal{F}_{n-1}^{+})\Big|\mathcal{F}_{n-1}\right]\\
	&=\mathbb{E}(\lambda_{n}^{2}+\lambda_{n}|\mathcal{F}_{n-1})=\mathbb{E}(\lambda_{n}^{2}+\lambda_{n})\to\sigma^{2}+\mu^{2}+\mu<\infty
\end{align*}
as $n\to\infty$. If we choose $\delta=1$, then Assumption 2 is satisfied.

\textbf{Assumption 3.}\ For every $n\geq{}1$ and $\boldsymbol{d}\in(\mathbb{Z}_{0}^{+})^{N}$ we have
\begin{align*}
	p_{\boldsymbol{d}}^{(n)}(\boldsymbol{0})&=\mathbb{E}\left[\exp\left(-\lambda_{n}\frac{\boldsymbol{d}^{T}\boldsymbol{1}}{2|E_{n-1}|}\right)\Bigg|\mathcal{F}_{n-1}\right].
\end{align*}

We will use Taylor expansion. In order to do this, we write the expectation in the following form:
\begin{align*}
	&\mathbb{E}\left[\exp\left(-\lambda_{n}\frac{\boldsymbol{d}^{T}\boldsymbol{1}}{2|E_{n-1}|}\right)\Bigg|\mathcal{F}_{n-1}\right]=\mathbb{E}\left(1-\lambda_{n}\frac{\boldsymbol{d}^{T}\boldsymbol{1}}{2|E_{n-1}|}\Bigg|\mathcal{F}_{n-1}\right)+\eta_{n}(\boldsymbol{d}),
\end{align*}
where
\begin{align*}
	\eta_{n}(\boldsymbol{d})&=\mathbb{E}\left[\exp\left(-\lambda_{n}\frac{\boldsymbol{d}^{T}\boldsymbol{1}}{2|E_{n-1}|}\right)-\left(1-\lambda_{n}\frac{\boldsymbol{d}^{T}\boldsymbol{1}}{2|E_{n-1}|}\right)\Bigg|\mathcal{F}_{n-1}\right].
\end{align*}
It is well known that for all $x\geq{}0$ we have $|e^{-x}-(1-x)|\leq\frac{x^{2}}{2}$, which implies that
\begin{align*}
	\left|\exp\left(-\lambda_{n}\frac{\boldsymbol{d}^{T}\boldsymbol{1}}{2|E_{n-1}|}\right)-\left(1-\lambda_{n}\frac{\boldsymbol{d}^{T}\boldsymbol{1}}{2|E_{n-1}|}\right)\right|\leq\frac{1}{2}\left(\lambda_{n}\frac{\boldsymbol{d}^{T}\boldsymbol{1}}{2|E_{n-1}|}\right)^{2}.
\end{align*}

By using the above inequality, we obtain that
\begin{align*}
	|\eta_{n}(\boldsymbol{d})|\leq{}&\mathbb{E}\left[\left|\exp\left(-\lambda_{n}\frac{\boldsymbol{d}^{T}\boldsymbol{1}}{2|E_{n-1}|}\right)-\left(1-\lambda_{n}\frac{\boldsymbol{d}^{T}\boldsymbol{1}}{2|E_{n-1}|}\right)\right|\Bigg|\mathcal{F}_{n-1}\right]\\
	\leq{}&\mathbb{E}\left[\frac{1}{2}\left(\lambda_{n}\frac{\boldsymbol{d}^{T}\boldsymbol{1}}{2|E_{n-1}|}\right)^{2}\Bigg|\mathcal{F}_{n-1}\right]=\mathbb{E}\left[\lambda_{n}^{2}\frac{\left(\boldsymbol{d}^{T}\boldsymbol{1}\right)^{2}}{8|E_{n-1}|^{2}}\Bigg|\mathcal{F}_{n-1}\right]\\
	=&\mathbb{E}(\lambda_{n}^{2})\frac{\left(\boldsymbol{d}^{T}\boldsymbol{1}\right)^{2}}{8|E_{n-1}|^{2}}=o\left(\frac{1}{n}\right)\textrm{ a.s.}
\end{align*}
by Assumption (IE2) and $|E_{n-1}|\sim\mu{}n$. The definition of $u_{n}(\boldsymbol{d})$ and $\eta_{n}(\boldsymbol{d})$ implies that
\begin{align*}
	u_{n}(\boldsymbol{d})&=n\left(\frac{\boldsymbol{d}^{T}\boldsymbol{1}}{2}\cdot\frac{\mathbb{E}(\lambda_{n})}{|E_{n-1}|}-\eta_{n}(\boldsymbol{d})\right).
\end{align*}
This is $\mathcal{F}_{n-1}$-measurable, hence $(u_{n}(\boldsymbol{d}))_{n=1}^{\infty}$ is a predictable process with respect to $\mathcal{F}$. Recall that $|E_{n-1}|\sim\mu{}n$, and $n\cdot\eta_{n}(\boldsymbol{d})=o(1)$ almost surely. Assumption (IE2) implies that
\begin{align*}
	u(\boldsymbol{d})&=\lim_{n\to\infty}u_{n}(\boldsymbol{d})=\frac{\boldsymbol{d}^{T}\boldsymbol{1}}{2}\textrm{ a.s.}
\end{align*}

\textbf{Assumption 4.}\ First, we fix $k\in[N]$. For every $n\geq{}1$ and $\boldsymbol{d}\in(\mathbb{Z}_{0}^{+})^{N}$, where $\boldsymbol{d}^{T}\boldsymbol{1}\geq{}1$, we have
\begin{align}
\label{eq_Assumption4_MIE}
	p_{\boldsymbol{d}-\boldsymbol{e}_{k}}^{(n)}(\boldsymbol{e}_{k})&=\mathbb{E}\left[\lambda_{n}\frac{d_{k}-1}{2|E_{n-1}|}\cdot\exp\left(-\lambda_{n}\frac{d_{k}-1}{2|E_{n-1}|}\right)\Bigg|\mathcal{F}_{n-1}\right]\\
	&=\frac{d_{k}-1}{2|E_{n-1}|}\mathbb{E}\left[\lambda_{n}\cdot\exp\left(-\lambda_{n}\frac{d_{k}-1}{2|E_{n-1}|}\right)\Bigg|\mathcal{F}_{n-1}\right]\nonumber.
\end{align}
Similarly to the previous case, we obtain that
\begin{align*}
	&\mathbb{E}\left[\lambda_{n}\cdot\exp\left(-\lambda_{n}\frac{d_{k}-1}{2|E_{n-1}|}\right)\Bigg|\mathcal{F}_{n-1}\right]=\mathbb{E}\left[\lambda_{n}\left(1-\lambda_{n}\frac{d_{k}-1}{2|E_{n-1}|}\right)\Bigg|\mathcal{F}_{n-1}\right]+\eta_{n}(\boldsymbol{d}),
\end{align*}
where
\begin{align*}
	\eta_{n}(\boldsymbol{d})&=\mathbb{E}\left[\lambda_{n}\cdot\exp\left(-\lambda_{n}\frac{d_{k}-1}{2|E_{n-1}|}\right)-\lambda_{n}\left(1-\lambda_{n}\frac{d_{k}-1}{2|E_{n-1}|}\right)\Bigg|\mathcal{F}_{n-1}\right].
\end{align*}
Again, by using $|e^{-x}-(1-x)|\leq\frac{x^{2}}{2}$ for all $x\geq{}0$, we conclude that
\begin{align*}
	\left|\lambda_{n}\cdot\exp\left(-\lambda_{n}\frac{d_{k}-1}{2|E_{n-1}|}\right)-\lambda_{n}\left(1-\lambda_{n}\frac{d_{k}-1}{2|E_{n-1}|}\right)\right|\leq\frac{\lambda_{n}}{2}\left(\lambda_{n}\frac{d_{k}-1}{2|E_{n-1}|}\right)^{2}.
\end{align*}
Combining this with Assumption (IE2), we obtain that
\begin{align*}
	|\eta_{n}(\boldsymbol{d})|&\leq\mathbb{E}\left[\left|\lambda_{n}\cdot\exp\left(-\lambda_{n}\frac{d_{k}-1}{2|E_{n-1}|}\right)-\lambda_{n}\left(1-\lambda_{n}\frac{d_{k}-1}{2|E_{n-1}|}\right)\right|\Bigg|\mathcal{F}_{n-1}\right]\\
	&\leq\mathbb{E}\left[\frac{\lambda_{n}}{2}\left(\lambda_{n}\frac{d_{k}-1}{2|E_{n-1}|}\right)^{2}\Bigg|\mathcal{F}_{n-1}\right]=\mathbb{E}\left[\lambda_{n}^{3}\frac{(d_{k}-1)^{2}}{8|E_{n-1}|^{2}}\Bigg|\mathcal{F}_{n-1}\right]\\
	&=\mathbb{E}(\lambda_{n}^{3})\frac{(d_{k}-1)^{2}}{8|E_{n-1}|^{2}}=o\left(\frac{1}{n}\right)\textrm{ a.s.}
\end{align*}
By using this we conclude that
\begin{align*}
	p_{\boldsymbol{d}-\boldsymbol{e}_{k}}^{(n)}(\boldsymbol{e}_{k})&=\frac{d_{k}-1}{2|E_{n-1}|}\mathbb{E}\left[\lambda_{n}\cdot\exp\left(-\lambda_{n}\frac{d_{k}-1}{2|E_{n-1}|}\right)\Bigg|\mathcal{F}_{n-1}\right]\\
	&=\frac{d_{k}-1}{2|E_{n-1}|}\left(\mathbb{E}\left[\lambda_{n}\left(1-\lambda_{n}\frac{d_{k}-1}{2|E_{n-1}|}\right)\Bigg|\mathcal{F}_{n-1}\right]+\eta_{n}(\boldsymbol{d})\right)\\
	&\sim\frac{d_{k}-1}{2}\cdot\frac{1}{n}+o\left(\frac{1}{n}\right)\textrm{ a.s.}
\end{align*}
Putting this together, we obtain that for every $k\in[N]$ we have
\begin{align*}
	\lim_{n\to\infty}np_{\boldsymbol{d}-\boldsymbol{e}_{k}}^{(n)}(\boldsymbol{e}_{k})&=r^{(k)}(\boldsymbol{d}-\boldsymbol{e}_{k})=\frac{d_{k}-1}{2}\textrm{ a.s.}
\end{align*}

Now let $\boldsymbol{i}\in{}H'(\boldsymbol{d})$, i.e.\ $\forall{}k\in[N]:0\leq{}i_{k}\leq{}d_{k}$ and $\boldsymbol{i}^{T}\boldsymbol{1}\geq{}2$. For every $n\geq{}1$ we have
\begin{align*}
	p_{\boldsymbol{d}-\boldsymbol{i}}^{(n)}(\boldsymbol{i})&=\mathbb{E}\left[\prod_{k=1}^{N}\frac{1}{i_{k}!}\left(\lambda_{n}\frac{d_{k}-i_{k}}{2|E_{n-1}|}\right)^{i_{k}}\exp\left(-\lambda_{n}\frac{d_{k}-i_{k}}{2|E_{n-1}|}\right)\Bigg|\mathcal{F}_{n-1}\right]\\
	&=\frac{\prod_{k=1}^{N}(d_{k}-i_{k})^{i_{k}}}{\left(2|E_{n-1}|\right)^{\boldsymbol{i}^{T}\boldsymbol{1}}\prod_{k=1}^{N}i_{k}!}\mathbb{E}\left[\lambda_{n}^{\boldsymbol{i}^{T}\boldsymbol{1}}\cdot\exp\left(-\lambda_{n}\frac{(\boldsymbol{d-i})^{T}\boldsymbol{1}}{2|E_{n-1}|}\right)\Bigg|\mathcal{F}_{n-1}\right]\\
	&\leq\frac{\prod_{k=1}^{N}(d_{k}-i_{k})^{i_{k}}}{\left(2|E_{n-1}|\right)^{\boldsymbol{i}^{T}\boldsymbol{1}}\prod_{k=1}^{N}i_{k}!}\mathbb{E}\left(\lambda_{n}^{\boldsymbol{i}^{T}\boldsymbol{1}}\right),
\end{align*}
which implies that
\begin{align*}
	\lim_{n\to\infty}np_{\boldsymbol{d}-\boldsymbol{i}}^{(n)}(\boldsymbol{i})=0\textrm{ a.s.}
\end{align*}

\textbf{Assumption 5.}\ By the dynamics of the model, for every $n\geq{}1$ and $\boldsymbol{d}\in(\mathbb{Z}_{0}^{+})^{N}$, the following holds:
\begin{align*}
	q^{(n)}(\boldsymbol{d})&=\mathbb{E}\left[\mathbb{P}\left(\bigcap_{k=1}^{N}\left\{\Delta_{n}^{(k)}=d_{k}\right\}\Big|\mathcal{F}_{n-1}^{+}\right)\Bigg|\mathcal{F}_{n-1}\right]\\
	&=\mathbb{E}\left[\prod_{k=1}^{N}\mathbb{P}\left(\Delta_{n}^{(k)}=d_{k}\Big|\mathcal{F}_{n-1}^{+}\right)\Bigg|\mathcal{F}_{n-1}\right]\\
	&=\mathbb{E}\left[\prod_{k=1}^{N}\frac{1}{d_{k}!}\left(\lambda_{n}\frac{\big|E_{n-1}^{(k)}\big|}{|E_{n-1}|}\right)^{d_{k}}\exp\left(-\lambda_{n}\frac{\big|E_{n-1}^{(k)}\big|}{|E_{n-1}|}\right)\Bigg|\mathcal{F}_{n-1}\right]\\
	&=\frac{1}{\prod_{k=1}^{N}d_{k}!}\prod_{k=1}^{N}\left(\frac{\big|E_{n-1}^{(k)}\big|}{|E_{n-1}|}\right)^{d_{k}}\mathbb{E}\left(\lambda_{n}^{\boldsymbol{d}^{T}\boldsymbol{1}}\cdot\exp\left(-\lambda_{n}\right)\Big|\mathcal{F}_{n-1}\right).
\end{align*}

By Lemma \ref{lemma_AER_MIE} and the independence of $\lambda_{n}$ and $\mathcal{F}_{n-1}$, we have
\begin{align*}
	q(\boldsymbol{d})&=\lim_{n\to\infty}q^{(n)}(\boldsymbol{d})=\lim_{n\to\infty}\frac{\prod_{k=1}^{N}\left(\hat{\zeta}_{n-1}^{(k)}\right)^{d_{k}}}{\prod_{k=1}^{N}d_{k}!}\mathbb{E}\left(\lambda_{n}^{\boldsymbol{d}^{T}\boldsymbol{1}}e^{-\lambda_{n}}\right)\\
	&=\frac{\prod_{k=1}^{N}\left(\hat{\zeta}^{(k)}\right)^{d_{k}}}{\prod_{k=1}^{N}d_{k}!}\mathbb{E}\left(\lambda^{\boldsymbol{d}^{T}\boldsymbol{1}}e^{-\lambda}\right)\textrm{ a.s.},
\end{align*}
since the function $t^{\boldsymbol{d}^{T}\boldsymbol{1}}e^{-t}$ is bounded and continuous and $\lambda_n\rightarrow\lambda$ in distribution.

We obtain that
\begin{align*}
	u(\boldsymbol{d})&=\frac{\boldsymbol{d}^{T}\boldsymbol{1}}{2},\\
	r^{(k)}(\boldsymbol{d}-\boldsymbol{e}_{k})&=\frac{d_{k}-1}{2}\quad(\forall{}k\in[N])\\
	q(\boldsymbol{d})&=\frac{\prod_{k=1}^{N}\left(\hat{\zeta}^{(k)}\right)^{d_{k}}}{\prod_{k=1}^{N}d_{k}!}\mathbb{E}\left(\lambda^{\boldsymbol{d}^{T}\boldsymbol{1}}e^{-\lambda}\right).
\end{align*}

Applying Theorem \ref{thm_ADD_GM} we get Theorem \ref{thm_ADD_MIE}. \hfill$\Box$

\section{Scale-free property of random graphs in the multi-type case}
A scale-free graph model is a random graph whose degree distribution follows a power law, i.e.\ the proportion of vertices with degree $d$ asymptotically equals to $d^{-\gamma}$, where $\gamma>0$ is a deterministic constant. It is well known that many large real networks have this property, see e.g.\ \cite{Hofstad}, although there are discussions about how common they are \cite{Broido_Clauset}.

The formal definition of scale-free property of random graphs with no types is the following.
\begin{definition}
We assume that the proportion of vertices with degree $d$ converges to a deterministic constant $c_{d}$ a.s. for all $d\geq{}0$, and the sum of the sequence $(c_{d})_{d=0}^{\infty}$ equals to 1. In this case the sequence $(c_{d})_{d=0}^{\infty}$ is an asymptotic degree distribution. Furthermore, if $c_{d}d^{\gamma}\to{}C$ as $d\to\infty$ holds with some positive $C$, then the model has the scale-free property, and $\gamma$ is the so-called characteristic exponent.
\end{definition}

We are going to use the following theorem.

\begin{theoremx}[Theorem 1 in \cite{Backhausz_Mori_2}]
	\label{thm_recurrence_equation}
	Consider the following recurrence equation:
	\begin{align*}
		x_{n}&=\sum_{j=1}^{n-1}w_{n,j}x_{n-j}+r_{n},\qquad{}w_{n,j}=a_{j}+\frac{b_{j}}{n}+c_{n,j}, \qquad{}(n=1,2,3,\dots),
	\end{align*}
	where $w_{n,j}\geq{}0$, and $a_{n}$, $b_{n}$, $c_{n,j}$, $r_{n}$ satisfy the following conditions.
	\begin{description}
		\item[(r1)]$a_{n}\geq{}0$ for $n\geq{}1$, and the greatest common divisor of the set
		$\{n:a_{n}>0\}$ is $\mathrm{1}$;
		\item[(r2)]$r_{n}\geq{}0$, and there exists such an $n$ that $r_{n}>0$;
		\item[(r3)]there exists $z>0$ such that
		\begin{align*}
			1<\sum_{n=1}^{\infty}a_{n}z^{n}&<\infty,\qquad\sum_{n=1}^{\infty}|b_{n}|z^{n}<\infty,\\
			\sum_{n=1}^{\infty}\sum_{j=1}^{\infty}|c_{n,j}|z^{j}&<\infty,\qquad\sum_{n=1}^{\infty}r_{n}z^{n}<\infty.
		\end{align*}
	\end{description}
	Suppose that the sequence $(x_{n})_{n=1}^{\infty}$ satisfies the recurrence equation, conditions $(r1)$-$(r3)$ hold, and $(x_{n})_{n=1}^{\infty}$ has infinitely many positive terms. Then $x_{n}n^{-\gamma}q^{n}\to{}C$ as $n\to\infty$, where $C$ is a positive constant, $q$ is the positive solution of equation $\sum_{n=1}^{\infty}a_{n}q^{n}=1$, and
	\begin{align*}
		\gamma&=\frac{\sum_{n=1}^{\infty}b_{n}q^{n}}{\sum_{n=1}^{\infty}na_{n}q^{n}}.
	\end{align*}
\end{theoremx}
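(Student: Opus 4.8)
The plan is to strip off the exponential growth by a rescaling, turn the recurrence into a perturbed renewal equation, and then extract the factor $n^{\gamma}$ from the dominant singularity of a generating function. First I would set $y_{n}=x_{n}q^{n}$, where $q>0$ solves $\sum_{n\ge1}a_{n}q^{n}=1$; such a $q$ exists and satisfies $q<\rho$, where $\rho$ denotes the point called $z$ in condition $(r3)$, because $s\mapsto\sum_{n}a_{n}s^{n}$ is continuous, increasing, and exceeds $1$ at $\rho$. Writing $\alpha_{j}=a_{j}q^{j}$, $\beta_{j}=b_{j}q^{j}$ and $\tilde{r}_{n}=r_{n}q^{n}$, multiplying the recurrence by $q^{n}$ gives
\begin{align*}
	y_{n}=\sum_{j=1}^{n-1}\alpha_{j}y_{n-j}+\frac{1}{n}\sum_{j=1}^{n-1}\beta_{j}y_{n-j}+\sum_{j=1}^{n-1}c_{n,j}q^{j}y_{n-j}+\tilde{r}_{n},
\end{align*}
and now $\sum_{j}\alpha_{j}=1$, so the leading kernel $(\alpha_{j})$ is a bona fide renewal kernel. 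The rescaling is designed so that, after it, the dominant singularity sits at $z=1$; condition $(r3)$ ensures the series $\sum_{j}\alpha_{j}z^{j}$, $\sum_{j}\beta_{j}z^{j}$ and $\sum_{n}\tilde{r}_{n}z^{n}$ are still analytic in a disk of radius $\rho/q>1$, and aperiodicity $(r1)$ guarantees that $z=1$ is the unique singularity of modulus $1$.

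Next I would introduce $Y(z)=\sum_{n}y_{n}z^{n}$, $\mathcal{A}(z)=\sum_{j}\alpha_{j}z^{j}$ with $\mathcal{A}(1)=1$, and $\mathcal{B}(z)=\sum_{j}\beta_{j}z^{j}$. Convolution by $(\alpha_{j})$ becomes multiplication by $\mathcal{A}$, whereas dividing by $n$ turns the $(\beta_{j})$-convolution into an integral, since $\sum_{n}n^{-1}g_{n}z^{n}=\int_{0}^{z}t^{-1}G(t)\,dt$ for $G(z)=\sum_{n}g_{n}z^{n}$. Neglecting for the moment the $c_{n,j}$ and $\tilde{r}_{n}$ contributions, the recurrence becomes the singular functional equation
\begin{align*}
	\bigl(1-\mathcal{A}(z)\bigr)Y(z)=\int_{0}^{z}\frac{\mathcal{B}(t)Y(t)}{t}\,dt+(\text{lower-order terms}).
\end{align*}
Differentiating and inserting the local expansions $1-\mathcal{A}(z)\sim\mu(1-z)$ and $\mathcal{A}'(z)\to\mu$ as $z\to1$, with $\mu=\mathcal{A}'(1)=\sum_{j}j\alpha_{j}=\sum_{j}ja_{j}q^{j}$, together with $\mathcal{B}(z)\to\beta:=\mathcal{B}(1)=\sum_{j}b_{j}q^{j}$, yields the logarithmic-derivative relation $\mu(1-z)Y'(z)\sim(\mu+\beta)Y(z)$, that is $Y'(z)/Y(z)\sim(\gamma+1)/(1-z)$ with $\gamma=\beta/\mu=\sum_{j}b_{j}q^{j}\big/\sum_{j}ja_{j}q^{j}$, precisely the asserted exponent. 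Integrating, $Y$ has a singularity of type $(1-z)^{-(\gamma+1)}$ at $z=1$, and the transfer theorem of singularity analysis then gives $y_{n}\sim C\,n^{\gamma}/\Gamma(\gamma+1)$; undoing the substitution produces $x_{n}n^{-\gamma}q^{n}\to C/\Gamma(\gamma+1)$.

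It then remains to check that the discarded terms do not perturb the conclusion. The source series $\sum_{n}\tilde{r}_{n}z^{n}$ is analytic beyond $z=1$ by $(r3)$, so it contributes only an asymptotically negligible analytic summand, while $(r2)$ together with the hypothesis that $(x_{n})$ has infinitely many positive terms is what forces $C>0$ rather than $C=0$. The subtle term is the $c_{n,j}$ one: here I would use the double-sum bound $\sum_{n}\sum_{j}|c_{n,j}|z^{j}<\infty$ from $(r3)$ to show that the associated operator produces a function analytic in a disk of radius strictly larger than $1$, hence a perturbation that moves neither the location $z=1$ of the dominant singularity nor the exponent $\gamma+1$.

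I expect the main obstacle to be the Tauberian step itself. Because the integral operator makes $Y$ the solution of a singular equation rather than a function in closed form, one cannot simply apply singularity analysis to a known expression; one must instead either (a) prove that $Y$ continues analytically to a $\Delta$-domain around $z=1$ with suitable polynomial growth bounds, which is what legitimises the transfer theorem, or (b) avoid complex analysis and argue directly on the real sequence: fix $\gamma$ by the balance $\gamma\mu=\beta$, set $y_{n}=Cn^{\gamma}(1+\varepsilon_{n})$, and prove $\varepsilon_{n}\to0$ via a Grönwall-type contraction estimate in which the summability of $(|c_{n,j}|q^{j})$ and of $(\tilde{r}_{n})$ absorbs the accumulated error. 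I would carry out route (b) for the rigorous proof, treating the generating-function computation of the second paragraph only as the device that identifies the correct values of $\gamma$ and $C$.
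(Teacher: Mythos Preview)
The paper does not prove this statement. Theorem~A is quoted verbatim from \cite{Backhausz_Mori_2} and used as a black box to derive the scale-free exponent in Section~5; no argument for it appears anywhere in the present article. So there is no ``paper's own proof'' to compare your proposal against.

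That said, your outline is a reasonable attack on the result. The rescaling $y_{n}=x_{n}q^{n}$ that turns $(\alpha_{j})=(a_{j}q^{j})$ into an aperiodic probability kernel is the natural first move, and your heuristic identification of $\gamma$ via the logarithmic derivative $Y'(z)/Y(z)\sim(\gamma+1)/(1-z)$ is correct in spirit and yields the right formula. You are also right that the delicate step is making the Tauberian transfer rigorous: the generating-function manipulation in your second paragraph is purely formal (you differentiate an equation for $Y$ that already presupposes knowledge of the singularity structure of $Y$), and your route~(a) would require establishing $\Delta$-domain analytic continuation for a function defined only implicitly, which is genuinely nontrivial here. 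Your proposed route~(b), a direct Gr\"onwall-type bootstrap on $\varepsilon_{n}$ in $y_{n}=Cn^{\gamma}(1+\varepsilon_{n})$, is closer to how such renewal-perturbation results are typically proved and is likely how the cited paper proceeds; the summability hypotheses in (r3) are tailored to make exactly that kind of error-absorption work. One point to tighten: you should verify that the constant $C$ you obtain is strictly positive, not merely nonnegative --- this is where condition~(r2) and the hypothesis of infinitely many positive $x_{n}$ enter, and it deserves an explicit argument rather than a one-line remark.
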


\subsection{Scale-free property of the generalized Barab\'asi--Albert random graph}
In addition to the assumptions in Section \ref{model_description_gen_BA}, we also assume that $M_{1},M_{2},M_{3},\dots$ is a sequence of identically distributed random variables and there exists $z>1$ such that $\sum_{l=1}^{\infty}\frac{\mathbb{E}(M_{1}^{l})}{l\cdot{}l!}z^{l}<\infty$. The last assumption is trivially fulfilled if $\sup_{l}\mathbb{E}(M_{1}^{l})<\infty$.

First, let us fix $k\in[N]$. For all $l\geq{}0$ we define $X_{n}^{(k)}(l)=\bigg|\left\{v\in{}V_{n}:\textrm{deg}_{n}^{(k)}(v)=l\right\}\bigg|$, i.e.\ the number of vertices in $G_{n}$ with $l$ edges of type $k$ connected to them. The asymptotic degree distribution of type $k$ edges is $\left(x^{(k)}_{l}\right)_{l=0}^{\infty}$, where $x^{(k)}_{l}$ is defined as the almost sure limit of the sequence $\left(\frac{X_{n}^{(k)}(l)}{|V_{n}|}\right)_{n=0}^{\infty}$ as $n\to\infty$. Recall that in every step the endpoints of the new edges are chosen independently of each other and the degrees of the existing vertices are not updated until the end of the step. By using this, we conclude that for every $l\geq{}0$ the change in the value of $X_{n}^{(k)}(l)$ only depends on the edges of type $k$, thus we have
\begin{align}
\label{eq_scale_free_BA}
	&\mathbb{E}\left[X_{n}^{(k)}(l)|\mathcal{F}_{n-1}\right]=X_{n-1}^{(k)}(l)\mathbb{E}\left[\left(1-\frac{l}{2|E_{n-1}|}\right)^{M_{n}}\Bigg|\mathcal{F}_{n-1}\right]\\
	&+\sum_{i=1}^{l-1}X_{n-1}^{(k)}(l-i)\cdot\mathbb{E}\left[{M_{n} \choose i}\left(\frac{l-i}{2|E_{n-1}|}\right)^{i}\left(1-\frac{l-i}{2|E_{n-1}|}\right)^{M_{n}-i}\Bigg|\mathcal{F}_{n-1}\right]\nonumber\\
	&+\mathbb{E}\left[{M_{n} \choose k}\left(\zeta_{n-1}^{(k)}\right)^{l}\left(1-\zeta_{n-1}^{(k)}\right)^{M_{n}-l}\Bigg|\mathcal{F}_{n-1}\right],\nonumber
\end{align}
where $\zeta_{n-1}^{(k)}$ is the proportion of edges of type $k$ in $G_{n-1}$. By using Lemma \ref{lemma_Agi_Tamas} and the same arguments as in the proof of Theorem \ref{thm_ADD_gen_BA}, we can show that $x^{(k)}_{l}$ exists for all $l$ and we can find the recurrence equations for the asymptotic degree distribution. The only part which is different to the previous sections is finding the almost sure limit of the last term in equation \eqref{eq_scale_free_BA} as the number of steps goes to infinity. Since $M_{n}$ is independent of $\mathcal{F}_{n-1}$ and $\zeta_{n-1}^{(k)}$ is measurable with respect to $\mathcal{F}_{n-1}$, we have
\begin{align}
	\label{eq_mean}
	\mathbb{E}\left[{M_{n} \choose l}\left(\zeta_{n-1}^{(k)}\right)^{l}\left(1-\zeta_{n-1}^{(k)}\right)^{M_{n}-l}\Big|\mathcal{F}_{n-1}\right]&=\mathbb{E}\left[{M_{n} \choose l}t^{l}(1-t)^{M_{n}-l}\right]\Bigg|_{t=\zeta_{n-1}^{(k)}}.
\end{align}
Recall that $(M_{n})_{n=1}^{\infty}$ is a sequence of identically distributed random variables, thus we define $f(t)=\mathbb{E}\left[{M_{1} \choose l}t^{l}(1-t)^{M_{1}-l}\right]$, where $t\in[0,1]$. By using Weierstrass' M-test, we are going to show that $f$ is continuous. For all $t\in[0,1]$, we have
\begin{align*}
	f(t)&=\mathbb{E}\left[{M_{1} \choose l}t^{l}(1-t)^{M_{1}-l}\right]=\sum_{i=l}^{\infty}{i \choose l}t^{l}(1-t)^{i-l}\mathbb{P}(M_{1}=i)\\
	&\leq\sum_{i=l}^{\infty}{i \choose l}\mathbb{P}(M_{1}=i)=\mathbb{E}\left[{M_{1} \choose l}\right]\leq\mathbb{E}\left(M_{1}^{l}\right)<\infty,
\end{align*}
by the Assumption (BA2), thus $f$ is continuous. Since $\zeta_{n}^{(k)}\to\zeta^{(k)}$ almost surely as $n\to\infty$ and $f$ is continuous, equation \eqref{eq_mean} implies that
\begin{align*}
	\lim_{n\to\infty}\mathbb{E}\left[{M_{n} \choose l}\left(\zeta_{n-1}^{(k)}\right)^{l}\left(1-\zeta_{n-1}^{(k)}\right)^{M_{n}-l}\Big|\mathcal{F}_{n-1}\right]&=\mathbb{E}\left[{M_{1} \choose l}\left(\zeta^{(k)}\right)^{l}\left(1-\zeta^{(k)}\right)^{M_{1}-l}\right] \mathrm{a.s.}
\end{align*}

By using Lemma \ref{lemma_Agi_Tamas}, for every $l$ we have
\begin{align}
\label{eq_recurrence_equation_gen_BA}
	x^{(k)}_{l}&=\frac{l-1}{l+2}x^{(k)}_{l-1}+\frac{2}{l+2}\mathbb{E}\left[{M_{1} \choose l}\left(\zeta^{(k)}\right)^{l}\left(1-\zeta^{(k)}\right)^{M_{1}-l}\right].
\end{align}

We are going to apply Theorem \ref{thm_recurrence_equation}. The last equation can be written as
\begin{align*}
	x^{(k)}_{l}&=\left[1-\frac{3}{l}+\left(\frac{3}{l}-\frac{3}{l+2}\right)\right]x^{(k)}_{l-1}+\frac{2}{l+2}\mathbb{E}\left[{M_{1} \choose l}\left(\zeta^{(k)}\right)^{l}\left(1-\zeta^{(k)}\right)^{M_{1}-l}\right].
\end{align*}
If we choose
\begin{align*}
	a_{1}=1,\qquad{}a_{j}=0\quad{}\textrm{for}\quad{}j\geq{}2,\\
	b_{1}=-3,\qquad{}b_{j}=0\quad\textrm{for}\quad{}j\geq{}2,\\
	c_{l,1}=\frac{3}{l}-\frac{3}{l+2},\qquad{}c_{l,j}=0\quad{}\textrm{for}\quad{}j\geq{}2,\\
	r_{l}^{(k)}=\frac{2}{l+2}\mathbb{E}\left[{M_{1} \choose l}\left(\zeta^{(k)}\right)^{l}\left(1-\zeta^{(k)}\right)^{M_{1}-l}\right],
\end{align*}
then the assumptions $\bf{(r1)}$ and $\bf{(r3)}$ of Theorem \ref{thm_recurrence_equation} are fulfilled by also using the fact that there exists $z>1$ such that $\sum_{l=1}^{\infty}\frac{\mathbb{E}(M_{1}^{l})}{l\cdot{}l!}z^{l}<\infty$. We know that there exists $l>0$ such that $\mathbb{P}(M_{1}=l)>0$. By using Lemma \ref{lemma_AER_GBA}, we conclude that $\zeta^{(k)}|M_{1}=l$ is positive with positive probability thus $r_{l}^{(k)}>0$ and the assumption $\bf{(r2)}$ is satisfied.

\begin{remark}
	If in addition to the assumptions in Section \ref{model_description_gen_BA}, we also assume that $M_{i}\equiv{}M$ for all $i\geq{}1$ where $M$ is a positive integer then in this case the proportion of edges of type $k$ has an absolutely continuous almost sure limit (see e.g.\ Theorem 3 in \cite{Chen_Kuba}), thus none of the types die out asymptotically with probability one.
\end{remark}

By using Theorem \ref{thm_recurrence_equation}, we conclude that for every $k\in[N]$ we have
\begin{align*}
	x^{(k)}_{l}l^{3}\to{}C_{k}
\end{align*}
as $l\to\infty$ for some positive $C_{k}$, thus the characteristic exponent equals to $3$.

Finally, for all $d\geq{}0$ we define $Z_{n}(d)=\bigg|\left\{v\in{}V_{n}:\sum_{k=1}^{N}\textrm{deg}_{n}^{(k)}(v)=d\right\}\bigg|$, i.e.\ the number of vertices in $G_{n}$ with $d$ edges connected to them. This way we get back to the single-type graph models. The asymptotic degree distribution is $(z_{d})_{d=0}^{\infty}$, where $z_{d}$ is defined as the almost sure limit of the sequence $\left(\frac{Z_{n}(d)}{|V_{n}|}\right)_{n=0}^{\infty}$ as $n\to\infty$. For every $d\geq{}0$ we have
\begin{align*}
	&\mathbb{E}\left[Z_{n}(d)|\mathcal{F}_{n-1}\right]=Z_{n-1}(d)\mathbb{E}\left[\left(1-\frac{d}{2|E_{n-1}|}\right)^{M_{n}}\Bigg|\mathcal{F}_{n-1}\right]\\
	&+\sum_{i=1}^{d-1}Z_{n-1}(k-i)\mathbb{E}\left[{M_{n} \choose i}\left(\frac{d-i}{2|E_{n-1}|}\right)^{i}\left(1-\frac{d-i}{2|E_{n-1}|}\right)^{M_{n}-i}\Bigg|\mathcal{F}_{n-1}\right]\nonumber\\
	&+\mathbb{P}\left(M_{n}=d|\mathcal{F}_{n-1}\right).
\end{align*}
By using the same argument as in the previous section, we have
\begin{align*}
	z_{d}d^{3}\to{}C
\end{align*}
as $d\to\infty$ for some positive $C$, thus the characteristic exponent equals to $3$. This
provides a generalization on some of the results of preferential attachment
models (see e.g.\ \cite{Hofstad}). As the calculation above shows, this model fits into the general framework of \cite{Fazekas_Noszly_Perecsenyi} or \cite{Ostroumova_Ryabchenko_Samosvat} for single-type preferential attachment random graphs.

\subsection{Scale-free property of the model of independent edges}
In the model of independent edges we can use the same arguments. In addition to the assumptions in Section \ref{model_description_MIE}, we also assume that $\lambda_{1},\lambda_{2},\lambda_{3},\dots$ is a sequence of identically distributed random variables and there exists $z>1$ such that $\sum_{l=1}^{\infty}\frac{\mathbb{E}(\lambda_{1}^{l})}{l\cdot{}l!}z^{l}<\infty$.

In the model of independent edges for every type $k$ we have $r^{(k)}_{l}=\frac{2}{l+2}\mathbb{E}\left(\frac{\left(\lambda_{1}\hat{\zeta}^{(k)}\right)^{l}}{l!}e^{-\lambda_{1}\hat{\zeta}^{(k)}}\right)$, where $\hat{\zeta}^{(k)}$ is the asymptotic proportion of edges of type $k$. Similarly to the previous subsection, by using Lemma \ref{lemma_AER_MIE}, we know that $\left(\hat{\zeta}^{(k)},\mathcal{F}_{n}\right)_{n=1}^{\infty}$ is a martingale and for every $k\in[N]$ we have $\big|E_{0}^{(k)}\big|>0$, thus $\hat{\zeta}^{(k)}$ is positive with positive probability and the last assumption of Theorem \ref{thm_recurrence_equation} is fulfilled.

In this special case we can prove the same results as in the previous subsection. For every $l\geq{}0$ we define $\hat{X}_{n}^{(k)}(l)=\left|\left\{v\in{}V_{n}:\textrm{deg}_{n}^{(k)}(v)=l\right\}\right|$. The asymptotic degree distribution of type $k$ edges is $\left(\hat{x}_{l}^{(k)}\right)_{l=0}^{\infty}$, where $\hat{x}_{l}^{(k)}$ is defined as the almost sure limit of the sequence $\left(\frac{\hat{X}_{n}^{(k)}(l)}{|V_{n}|}\right)_{n=0}^{\infty}$ as $n\to\infty$. For every $k\in[N]$ we have
\begin{align*}
	\hat{x}_{l}^{(k)}\to\hat{C}_{k}
\end{align*}
as $l\to\infty$ for some positive $\hat{C}_{k}$, and the characteristic exponent equals to $3$. Again, for every $d\geq{}0$ we define $\hat{Z}_{n}(d)=\left|\left\{v\in{}V_{n}:\sum_{k=1}^{N}\textrm{deg}_{n}^{(k)}(v)=d\right\}\right|$. The asymptotic degree distribution is $(\hat{z}_{d})_{d=0}^{\infty}$, where $\hat{z}_{d}$ is defined as the almost sure limit of the sequence $\left(\frac{\hat{Z}_{n}(d)}{|V_{n}|}\right)_{n=0}^{\infty}$ as $n\to\infty$. By using the same argument as in the previous subsection, we have
\begin{align*}
	\hat{z}_{d}d^{3}\to{}\hat{C}
\end{align*}
as $d\to\infty$ for some positive $\hat{C}$, thus the characteristic exponent equals to $3$.

\section*{Acknowledgements}
This research was partially supported by Pallas Athene Domus Educationis Foundation. The views expressed are those of the authors’ and do not necessarily reflect the official opinion of Pallas Athene Domus Educationis Foundation. The first author was supported by the Bolyai Research Grant of the Hungarian Academy of Sciences.

The authors thank the referees for their review and appreciate the comments and
suggestions which contributed to improve the quality of the article.


\begin{thebibliography}{99}
	\bibitem{Abdullah} M. A. Abdullah, M. Bode and N. Fountoulakis, Local majority dynamics on preferential attachment graphs. International Workshop on Algorithms and Models for the Web-Graph. Springer, Cham, 2015.
	\bibitem{Acemoglu} D. Acemoglu, A. Ozdaglar and A. Tahbaz-Salehi, Systemic risk and stability in financial networks, American Economic Review {\bf 105.2} (2015), 564-608.
	\bibitem{Racz}T. Antunovi\'c, E. Mossel and M. R\'acz, Coexistence in preferential attachment networks. Combinatorics, Probability and Computing {\bf 25.6} (2016), 797--822.
	\bibitem{BarabasiAlbert_Albert}A-L. Barab\'asi and R. Albert, Emergence of scaling in random networks, Science, {\bf 286} (1999) 509--512.
	\bibitem{Backhausz_Mori}\'A. Backhausz\ and\ T. F. M\'ori, A random model of publication activity, Discrete Appl. Math. {\bf 162} (2014), 78--89.
	\bibitem{Backhausz_Mori_2}\'A. Backhausz and T. F. M\'ori, Asymptotics of a renewal-like recursion and an integral equation, Applicable Analysis and Discrete Mathematics, \textbf{8} (2014), 200-223.
	\bibitem{Bolobas_Borgs_Chayes_Riordan}B. Bollob\'as, C. Borgs, J. Chayes and O. Riordan, Directed scale-free graphs, Proceedings of the 14th Annual ACM-SIAM Symposium on Discrete Algorithms (SODA), (2003), 132-139.
	\bibitem{Bollobas_Riordan_Spencer_Tusnady}B. Bollob\'as, O. Riordan, J. Spencer and G. Tusn\'ady, The degree sequence of a scale-free random graph process, Random Structures Algorithms \textbf{18} (2001), no. 3, 279–290.
	\bibitem{Broido_Clauset} A. D. Broido\ and A. Clauset, Scale-free networks are rare, arXiv preprint arXiv:1801.03400, 2018.
	\bibitem{Chen_Kuba}M.-R. Chen\ and\ M. Kuba, On generalized P\'olya urn models, J. Appl. Probab. {\bf 50} (2013), no.~4, 1169--1186.
	\bibitem{Cooper_Frieze}C. Cooper\ and\ A. Frieze, A general model of web graphs, Random Structures Algorithms {\bf 22} (2003), no.~3, 311--335.
	\bibitem{David}F. David, C. Hagendorf and K. J. Wiese, A growth model for rna secondary structures. Journal of Statistical Mechanics Theory and Experiment, {\bf 128}, (2008)
	\bibitem{Dereich_Morters}S. Dereich and P. M\"orters, Random networks with sublinear preferential attachment: degree evolutions, Electron. J. Probab. \textbf{14} (2009), no. 43, 1222–1267.
	\bibitem{Dereich_Ortgiese}S. Dereich and M. Ortgiese, Robust analysis of preferential attachment models with fitness. Combinatorics, Probability and Computing {\bf 23.3} (2014), 386--411.
	\bibitem{Durrett}R. Durrett, {\it Random graph dynamics}, Cambridge University Press, 2006.
	\bibitem{Frieze_Karonski}A. Frieze and M. Karo\'nski, Introduction to random graphs, Cambridge Univ. Press, Cambridge, 2015.
	\bibitem{Garavaglia_Hofstad}A. Garavaglia, R. van der Hofstad and G. Woeginger, The dynamics of power laws: fitness and aging in preferential attachment trees, J. Stat. Phys. \textbf{168} (2017), no. 6, 1137–1179.
	\bibitem{Hofstad}R. van der Hofstad, {\it Random graphs and complex networks}, Cambridge series in statistical and proba\-bilistic mathematics \textbf{43} (2016).
	\bibitem{Fazekas_Noszly_Perecsenyi}I. Fazekas, Cs. Nosz\'aly and A. Perecs\'enyi, A population evolution model and its applications to random networks, arXiv preprint arXiv:1604.01579, 2016.
	\bibitem{Katona_Mori}Z. Katona and T. F. M\'ori, A new class of scale free random graphs, Statist. Probab. Lett. \textbf{76} (2006), no. 15, 1587–1593.
	\bibitem{Neveu}J. Neveu, {\it Discrete-parameter martingales}, translated from the French by T. P. Speed., revised edition, North-Holland Publishing Co., Amsterdam, 1975.
	\bibitem{Ostroumova_Ryabchenko_Samosvat}L. Ostroumova, A. Ryabchenko and E. Samosvat, Generalized preferential attachment: tunable powerlaw degree distribution and clustering coefficient, in Algorithms and models for the web graph, 185–202, Lecture Notes in Comput. Sci., 8305, Springer, Cham.
	\bibitem{Petrov}V. V. Petrov, {\it Limit theorems of probability theory}, Oxford Studies in Probability, 4, The Clarendon Press, Oxford University Press, New York, 1995.
	\bibitem{Rosengren}S. Rosengren, A multi-type preferential attachment model, arXiv preprint arXiv:1704.03256, 2017.
	\bibitem{Wang_Resnick}T. Wang and S. Resnick, Degree growth rates and index estimation in a directed preferential attachment model, arXiv preprint arXiv:1808.01637, 2018.
\end{thebibliography}
\end{document}